\newcommand{\N}{{\mathds{N}}}
\newcommand{\Z}{{\mathds{Z}}}
\newcommand{\R}{{\mathds{R}}}
\newcommand{\C}{{\mathds{C}}}
\newcommand{\T}{{\mathds{T}}}
\newcommand{\D}{{\mathfrak{D}}}
\newcommand{\A}{{\mathfrak{A}}}
\newcommand{\B}{{\mathfrak{B}}}
\newcommand{\Lip}{{\mathsf{L}}}
\newcommand{\modpropinquity}[1]{{\mathsf{\Lambda}^{\mathsf{mod}}_{#1}}}
\newcommand{\Kantorovich}[1]{{\mathsf{mk}_{#1}}}
\newcommand{\KantorovichMod}[1]{{\mathsf{k}_{#1}}}
\newcommand{\Haus}[1]{{\mathsf{Haus}_{#1}}}
\newcommand{\StateSpace}{{\mathscr{S}}}
\newcommand{\MongeKant}{{Mon\-ge-Kan\-to\-ro\-vich metric}}
\newcommand{\Lqcms}{{\JLL} quantum compact metric space}
\newcommand{\gQqcms}{quasi-Leibniz quantum compact metric space}
\newcommand{\gQVB}{metrized quantum vector bundle}
\newcommand{\qcms}{quantum compact metric space}
\newcommand{\unit}{1}
\newcommand{\sa}[1]{{\mathfrak{sa}\left({#1}\right)}}
\newcommand{\inner}[3]{{\left<{#1},{#2}\right>_{#3}}}
\newcommand{\JLL}{Lei\-bniz}
\newcommand{\bridge}[1]{#1}
\newcommand{\dom}[1]{{\operatorname*{dom}\left({#1}\right)}}
\newcommand{\norm}[2]{{\left\|{#1}\right\|_{#2}}}
\newcommand{\bridgemodularreach}[1]{{\varrho^\sharp\left({#1}\right)}}
\newcommand{\bridgeimprint}[1]{{\varpi\left(\bridge{#1}\right)}}
\newcommand{\bridgelength}[1]{{\lambda\left(\bridge{#1}\right)}}
\newcommand{\bridgenorm}[2]{{\mathsf{bn}_{ \bridge{#1}  }\left({#2}\right)}}
\newcommand{\CDN}{{\mathsf{D}}}
\newcommand{\cocycle}[1]{{\mathrm{e}_{#1}}}
\newcommand{\worknote}[1]{} 
\newcommand{\opnorm}[3]{{\left|\mkern-1.5mu\left|\mkern-1.5mu\left| {#1} \right|\mkern-1.5mu\right|\mkern-1.5mu\right|_{#3}^{#2}}}
\newcommand{\alg}[1]{{\mathfrak{#1}}}
\newcommand{\modlip}[2]{{\module{D}_{#1}\left({#2}\right)}}
\newcommand{\module}[1]{{\mathscr{#1}}}
\newcommand{\qt}[1]{{\mathcal{A}_{#1}}}
\newcommand{\HeisenbergGroup}{{\mathds{H}_3}}
\newcommand{\HeisenbergMod}[2]{{\module{H}_{#1}^{#2}}}
\theoremstyle{plain}
\newtheorem{theorem}{Theorem}[section]
\newtheorem{lemma}[theorem]{Lemma}
\newtheorem{theorem-definition}[theorem]{Theorem-Definition}
\theoremstyle{definition}
\newtheorem{definition}[theorem]{Definition}
\newtheorem{convention}[theorem]{Convention}
\theoremstyle{remark}
\newtheorem{notation}[theorem]{Notation}
\renewcommand{\geq}{\geqslant}
\renewcommand{\leq}{\leqslant}
\numberwithin{equation}{section}
\begin{document}

\title[Heisenberg Modules and the modular propinquity]{Convergence of Heisenberg Modules over Quantum $2$-tori for the Modular Gromov-Hausdorff Propinquity}
\author{Fr\'{e}d\'{e}ric Latr\'{e}moli\`{e}re}
\email{frederic@math.du.edu}
\urladdr{http://www.math.du.edu/\symbol{126}frederic}
\address{Department of Mathematics \\ University of Denver \\ Denver CO 80208}
\thanks{This work is part of the project supported by the grant H2020-MSCA-RISE-2015-691246-QUANTUM DYNAMICS and grant \#3542/H2020/2016/2 of the Polish Ministry of Science and Higher Education.}

\date{\today}
\subjclass[2000]{Primary:  46L89, 46L30, 58B34.}
\keywords{Noncommutative metric geometry, Gromov-Hausdorff convergence, Monge-Kantorovich distance, Quantum Metric Spaces, Lip-norms, D-norms, Hilbert modules, noncommutative connections, noncommutative Riemannian geometry, unstable $K$-theory.}

\begin{abstract}
The modular Gromov-Hausdorff propinquity is a distance on classes of modules endowed with quantum metric information, in the form of a metric form of a connection and a left Hilbert module structure. This paper proves that the family of Heisenberg modules over quantum two tori, when endowed with their canonical connections, form a continuous family for the modular propinquity.
\end{abstract}
\maketitle



\section{Introduction}

The modular Gromov-Hausdorff propinquity \cite{Latremoliere16c} is a distance on modules over C*-algebras endowed with some quantum metric information, designed to advance our project of constructing an analytic framework for the study of classes of C*-algebras as geometric objects. Convergence for compact quantum metric has been an active area of our research with many developments, built on top of our noncommutative analogue of the Gromov-Hausdorff metric called the dual propinquity \cite{Latremoliere13,Latremoliere13b,Latremoliere14,Latremoliere15,Latremoliere15b,Latremoliere15c,Latremoliere15d,Latremoliere16b}. A natural question in noncommutative metric geometry concerned the behavior of modules over convergent sequences of quantum metric spaces. Indeed, modules encode much geometric information and are key ingredients for theories in mathematical physics, and thus for our research program to advance toward its goal, it is essential to develop a metric geometric theory for modules over C*-algebras. Remarkably, there seems to be no classical model from which to work for such a new distance. We propose our answer, called the modular Gromov-Hausdorff propinquity, as a deep extension of the quantum propinquity in \cite{Latremoliere16c}, and this paper provides the first deep example of convergence for our new metric. We proved in \cite{Latremoliere15c} that quantum tori form a continuous family for the quantum propinquity, and we now prove that Heisenberg modules over quantum two-tori also form continuous families for our new modular propinquity. As all finitely generated, projective modules over quantum two tori are sums of Heisenberg modules and free modules, and free modules were handled in \cite{Latremoliere16c}, we thus provide in this paper a detailed picture of the metric geometry of the class of finitely generated, projective modules over quantum two-tori.

Noncommutative metric geometry \cite{Connes89,Rieffel98a,Rieffel00} studies noncommutative generalizations of Lipschitz algebras, defined as:
\begin{definition}
An ordered pair $(\A,\Lip)$ is a \emph{\Lqcms} when $\A$ is a unital C*-algebra, whose norm we denote as $\|\cdot\|_\A$ and whose unit is denoted as $\unit_\A$, and $\Lip$ is a seminorm defined on a dense Jordan-Lie subalgebra $\dom{\Lip}$ of the space of self-adjoint elements $\sa{\A}$ of $\A$ such that:
\begin{enumerate}
\item $\{a\in\dom{\Lip} : \Lip(a) = 0 \} = \R\unit_\A$,
\item the {\MongeKant} $\Kantorovich{\Lip}$ defined on the state space $\StateSpace(\A)$ of $\A$ by setting, for any two $\varphi,\psi \in \StateSpace(\A)$:
\begin{equation*}
\Kantorovich{\Lip}(\varphi,\psi) = \sup\left\{ |\varphi(a) - \psi(a)| : a\in\dom{\Lip}, \Lip(a) \leq 1 \right\}
\end{equation*}
metrizes the weak* topology restricted to $\StateSpace(\A)$,
\item $\Lip$ is lower semi-continuous,
\item $\max\left\{ \Lip\left(\frac{a b + b a}{2}\right),\Lip\left(\frac{a b - b a}{2i}\right)\right\} \leq \|a\|_\A \Lip(b) + \|b\|_\A \Lip(a)$.
\end{enumerate}
\end{definition}

{\Lqcms s}, and more generally {\gQqcms s} (a generalization we will not need in this paper), form a category with the appropriate notion of Lipschitz morphisms \cite{Latremoliere16}, containing such important examples as quantum tori \cite{Rieffel98a}, Connes-Landi spheres \cite{li03}, group C*-algebras for hyperbolic groups and nilpotent groups \cite{Rieffel02,Ozawa05}, AF algebras \cite{Latremoliere15c}, Podl{\`e}s spheres \cite{Kaad18}, certain C*-crossed-products \cite{Hawkins13}, among others. Any compact metric space $(X,d)$ give rise to the {\Lqcms} $(C(X),\mathrm{Lip})$ where $C(X)$ is the C*-algebra of $\C$-valued continuous functions over $X$, and $\mathrm{Lip}$ is the Lipschitz seminorm induced by $d$.

Our interest in this paper is to prove the continuity of families of modules over {\Lqcms s}. A module over an {\Lqcms} is defined as follows. We refer to \cite{Latremoliere16c} for the motivation behind this definition.

\begin{definition}[{\cite[Definition 3.8]{Latremoliere16c}}]\label{qvb-dev}
A \emph{(Leibniz) \gQVB} $(\module{M},\inner{\cdot}{\cdot}{\module{M}},\CDN,\A,\Lip)$ consists of:
\begin{enumerate}
\item a {\Lqcms} $(\A,\Lip)$ called the base space,
\item a $\A$-left Hilbert module $(\module{M},\inner{\cdot}{\cdot}{\module{M}})$,
\item a norm $\CDN$ defined on a dense subspace of $\module{M}$ such that $\CDN(\omega) \geq \sqrt{\norm{\inner{\omega}{\omega}{\module{M}}}{\A}}$ for all $\omega\in \module{M}$, and such that the set:
\begin{equation*}
\left\{ \omega \in \module{M} : \CDN(\omega) \leq 1 \right\}
\end{equation*}
is compact in $\module{M}$,
\item for all $a \in \sa{\A}$ and for all $\omega \in \module{M}$, we have the \emph{inner Leibniz inequality}:
\begin{equation*}
\CDN_{\module{M}}\left(a\omega\right) \leq \left(\|a\|_\A+\Lip_\A(a)\right) \CDN_{\module{M}}(\omega)\text{,}
\end{equation*}
\item for all $\omega,\eta \in \module{M}$, we have the \emph{modular Leibniz inequality}:
\begin{equation*}
\max\left\{\Lip_\A\left(\Re\inner{\omega}{\eta}{\module{M}}\right),\Lip_\A\left(\Im\inner{\omega}{\eta}{\module{M}}\right)\right\} \leq 2 \CDN_{\module{M}}(\omega) \CDN_{\module{M}}(\eta) \text{.}
\end{equation*}
\end{enumerate}
The norm $\CDN$ is called a \emph{D-norm}.
\end{definition}
We note that we index inner products of Hilbert modules by the module instead of the usual convention to index by the base C*-algebra.

We introduce in \cite{Latremoliere16c} (see \cite{Latremoliere18d} as well) a metric on the class of {\gQVB s}, called the \emph{modular propinquity} $\modpropinquity{}$. We refer to \cite{Latremoliere16c} for the construction of $\modpropinquity{}$, which is indeed a metric up to full quantum isometry of {\gQVB s}, and some of its core properties.

In this paper, we will compute an upper bound on the modular propinquity between certain {\gQVB s} constructed from Heisenberg modules over quantum tori, and we now recall the key elements of our construction in \cite{Latremoliere16c} which we will use to derive these bounds.

First, given any two {\gQVB s}, we use a structure to connect them together, thus enabling us to quantify how far apart they might be. This structure is called a modular bridge, as it extends the idea of a bridge between {\qcms s} introduced in \cite{Latremoliere13}.

\begin{definition}\label{modular-bridge-def}
Let $\Omega_\A = (\module{M},\inner{\cdot}{\cdot}{\module{M}}, \CDN_{\module{M}},\A,\Lip_\A)$ and $\Omega_\B = (\module{N},\inner{\cdot}{\cdot}{\module{N}}, \CDN_{\module{N}},\B,\Lip_\B)$ be two {\gQVB s}. A \emph{modular bridge}
\begin{equation*}
\gamma = (\D,x,\pi_\A,\pi_\B,(\omega_j)_{j\in J}, (\eta_j)_{j\in J})
\end{equation*}
from $\Omega_\A$ to $\Omega_\B$ is given by:
\begin{enumerate}
\item a unital C*-algebra $\D$,
\item an element $x \in \D$, called the \emph{pivot} of $\gamma$, such that:
\begin{equation*}
\StateSpace_1(\D|\omega) := \left\{ \varphi \in \StateSpace(\D) \middle\vert \forall a\in\sa{\D} \quad \varphi(a\omega) = \varphi(\omega a) = \varphi(a) \right\}
\end{equation*}
is not empty,
\item two unital *-monomorphisms $\pi_\A : \A \hookrightarrow \D$ and $\pi_\B : \B \hookrightarrow \D$,
\item an index set $J$ and two families $(\omega_j)_{j\in J} \in \module{M}^J$, $(\eta_j)_{j\in J}) \in \module{N}^J$, respectively called the \emph{anchors} and \emph{co-anchors} of $\gamma$, such that for all $j\in J$, we have $\CDN_{\module{M}}(\omega_j)\leq 1$ and $\CDN_{\module{N}}(\eta_j) \leq 1$.
\end{enumerate}
\end{definition}

In \cite{Latremoliere13,Latremoliere16c}, we associate a number, called the length, to a modular bridge. This number involves the notion of length for a bridge between {\qcms s} as defined in \cite{Latremoliere13}, though we will not need its exact expression in this paper.

\begin{definition}\label{bridge-length-def}
Let $\Omega_\A = (\module{M},\inner{\cdot}{\cdot}{\module{M}}, \CDN_{\module{M}},\A,\Lip_\A)$ and $\Omega_\B = (\module{N},\inner{\cdot}{\cdot}{\module{N}}, \CDN_{\module{N}},\B,\Lip_\B)$ be two {\gQVB s} and let:
\begin{equation*}
\gamma = (\D,x,\pi_\A,\pi_\B,(\omega_j)_{j\in J},(\eta_j)_{j\in J})
\end{equation*}
be a bridge from $\Omega_\A$ to $\Omega_\B$.

The \emph{length} $\bridgelength{\gamma}$ of $\gamma$ is the maximum of the length of $(\D,x,\pi_\A,\pi_\B)$ as defined in \cite[Definition 3.17]{Latremoliere13} and $\bridgeimprint{\gamma} + \bridgemodularreach{\gamma}$ where:
\begin{enumerate}
\item the modular reach $\bridgemodularreach{\gamma}$ of $\gamma$ is:
\begin{equation*}
\bridgemodularreach{\gamma} = \max\left\{ \bridgenorm{\gamma}{\inner{\omega_j}{\omega_k}{\module{M}}, \inner{\eta_j}{\eta_k}{\module{N}}} : j,k \in J \right\}\text{,}
\end{equation*}
with
\begin{equation*}
\forall (a,b)\in\A\oplus\B \quad \bridgenorm{\gamma}{a,b} = \left\| \pi_\A(a) x - x \pi_\B(b) \right\|_\D \text{;}
\end{equation*}

\item the imprint $\bridgeimprint{\gamma}$ of $\gamma$ is:
\begin{multline*}
\bridgeimprint{\gamma} = \max\left\{ \Haus{\module{M}}(\{\omega_j:j\in J\}, \{\omega\in\module{M} : \CDN_{\module{M}}(\omega)\leq 1 \}),\right. \\ \left. \Haus{\module{N}}(\{\eta_j:j\in J\}, \{\eta\in\module{N} : \CDN_{\module{N}}(\eta)\leq 1 \}) \right\}\text{,}
\end{multline*}
where $\Haus{\module{M}}$ is the Hausdorff metric induced by the metric:
\begin{equation*}
\KantorovichMod{\CDN_{\module{M}}} : \omega,\eta \in \module{M} \mapsto \sup \left\{\left|\inner{\omega - \eta}{\xi}{\module{M}}\right| : \xi\in\module{M},\CDN_{\module{M}}(\xi) \leq 1 \right\}\text{,}
\end{equation*} 
with a similar definition for $\Haus{\module{N}}$.
\end{enumerate}
\end{definition}

For our purpose, the key role of bridges is summarized in the fact that by construction, if $\mathds{A}$ and $\mathds{B}$ are two {\gQVB s}, and if $\gamma$ is a bridge from $\mathds{A}$ to $\mathds{B}$, then:
\begin{equation}\label{propinquity-eq}
  \modpropinquity{}\left(\mathds{A},\mathds{B}\right) \leq \bridgelength{\gamma}\text{.}
\end{equation}

The present manuscript provides our first main example of convergence for the modular propinquity: the family of Heisenberg modules over quantum $2$-tori. As shown in \cite{Rieffel83}, finitely generated projective modules over irrational rotation algebras can be described, up to module isomorphism, as either free --- a case with which we dealt in \cite{Latremoliere16c} --- or constructed through a projective unitary representation of $\R^2$, or equivalently, a unitary representation of the Heisenberg group,  as seen in \cite{Connes80}. We briefly indicate in the next section how to endow Heisenberg modules with the structure of a {\gQVB}, which is the topic of \cite{Latremoliere17a} where we discussed the relation between Connes' connection \cite{Connes80,ConnesRieffel87} on Heisenberg modules and our quantum metric structure.

As they are {\gQVB s}, we can ask whether Heisenberg modules form continuous families for the modular propinquity. More specifically, Heisenberg modules over a quantum torus are naturally parametrized by their $K_0$ class, or equivalently, by the value the trace of that class, which is an element of $\Z + \theta\Z \subseteq \R$. We will prove in this paper that if $p,q \in \Z$ are fixed, then the Heisenberg modules whose $K_0$ class has trace $p\theta+q$ vary continuously in $\theta$ for the modular propinquity (note that the base algebra, the quantum torus, also varies continuously with $\theta$ for the quantum propinquity).

The strategy of this paper, which is reflected in its structure, begins with proving a continuous field type of result for the D-norms defined on Heisenberg modules. We then prove a form of uniform approximation for elements in Heisenberg modules using certain convolution-type operators. This involve the use of some harmonic analysis based upon the Hermite functions. We then bring all of this together in our main result.

As a matter of convention throughout this paper, we will use the following notations.

\begin{notation}
By default, the norm of a normed vector space $E$ is denoted by $\norm{\cdot}{E}$. When $\A$ is a C*-algebra, the space of self-adjoint elements of $\A$ is denoted by $\sa{\A}$. The state space of $\A$ is denoted by $\StateSpace(\A)$. In this work, all C*-algebras $\A$ will always be unital with unit $\unit_\A$.
\end{notation}

\begin{convention}
If $P$ is some seminorm on a vector subspace $D$ of a vector space $E$, then for all $x\in E\setminus D$ we set $P(x) = \infty$. With this in mind, the domain $D$ of $P$ is the set $\{ x \in E : P(x) < \infty \}$, with the usual convention that $0\infty = 0$ while all other operations involving $\infty$ give $\infty$.
\end{convention}

\section{Background on Heisenberg modules as {\gQVB s}}

A \emph{($2$-dimensional) quantum torus} $\qt{\theta}$, for $\theta\in\R$, is the twisted convolution C*-algebra $C^\ast\left(\Z^2,\cocycle{\theta}\right)$ of $\Z^2$ for (restriction to $\Z^2$ of) the $2$-cocycle (of $\R^2$):
\begin{equation}\label{sigma-eq}
  \cocycle{\theta} : ((x_1,y_1),(x_2,y_2)) \in \R^2\times \R^2 \longmapsto \exp\left(i\pi\theta(x_2 y_1 - x_1 y_2)\right)\text{.}
\end{equation}
It is thus the universal C*-algebra generated by two unitaries $U_\theta$, $V_\theta$ such that $V_\theta U_\theta = \cocycle{\theta}((1,0),(0,1)) U_\theta V_\theta$. By universality, there exists a (strongly continuous) action $\beta_\theta$ of $\T^2 = \{(\exp(ix),\exp(iy)):x,y\in\R\}$ on $\qt{\theta}$ by *-automorphisms, uniquely defined by setting for all $(z,w)\in\T^2$:
\begin{equation*}
  \beta_\theta^{z,w}U_\theta=z U_\theta\text{ and }\beta_\theta^{z,w}V_\theta = w V_\theta\text{.}
\end{equation*}

Quantum tori are primary examples of noncommutative Riemannian manifolds \cite{Connes80,Rieffel90}. The question which we solve in this manuscript is whether interesting modules over quantum tori form continuous families for the modular propinquity. Arguably, the most relevant modules to consider are the \emph{Heisenberg modules} over the quantum tori, the construction of which we now summarize.

The Heisenberg group $\HeisenbergGroup$ is defined as $\R^3$ with the multiplication:
\begin{equation*}
  \forall (x,y,t),(x',y',t') \in \HeisenbergGroup \quad (x,y,t)(x',y',t') = (x + x', y + y', t + t' + x y') \text{.}
\end{equation*}

We follow the construction in \cite{ConnesRieffel87}. Let us fix $p \in \Z$, $q\in \N\setminus \{0\}$ and $d \in q\N\setminus\{0\}$. Set $\eth = \theta - \frac{p}{q}$.

The irreducible unitary representation $\alpha_{\eth,d}$ of $\HeisenbergGroup$ on $L^2(\R)\otimes\C^d = L^2(\R,\C^d)$ is given by:
\begin{equation}\label{alpha-action-eq}
\alpha_{\eth,d}^{x,y,u} \xi : s\in \R \mapsto \exp(2i\pi(\eth u + s x))\xi(s + \eth y)
\end{equation}

Let $(e_1,\ldots,e_d)$ be the canonical basis for $\C^d$. Define $u_{p,q} e_j = \exp\left(\frac{2 i \pi p}{q}\right)$ and $u_{p,q} e_j = e_{ (j+1) \mod d }$ for all $j \in \{1,\ldots,d\}$.

For all $(n,m) \in \Z^2$, we then define:
\begin{equation*}
  \varpi_{d,p,q}^{n,m}\xi = \exp\left(\frac{i\pi p n m}{q}\right)u_{p,q}^n v_{p,q}^m \;\cdot\; \sigma_{p,q,d}^{n,m} \xi \text{.}
\end{equation*}
By construction, $\varpi_{d,p,q}$ is a unitary projective representation of $\Z^2$ for the $2$-cocycle $\cocycle{\theta}$, thus it defines a unique *-representation of $\qt{\theta}$ on $L^2(\R)\otimes\C^d = L^2(\R,\C^d)$ by integration.

Now, we note that the following space is carried into itself by $\varpi_{p,q,d}$: 
\begin{equation*}
  \mathcal{S}(\C^d) = \left\{ f : \R\rightarrow\C^d:\forall p \in \R[X] \; \lim_{x\rightarrow\pm\infty}p(x)f(x) =0 \right\}\text{.}
\end{equation*}

For any $\xi,\omega\in\mathcal{S}(\C^d)$, we set:
\begin{equation*}
\inner{\xi}{\omega}{\HeisenbergMod{\theta}{p,q,d}} : (n,m) \in \Z^2 \longmapsto \inner{\varpi_{p,q,\eth,d}^{n,m}\xi}{\omega}{L^2(\R)\otimes E} \text{.}
\end{equation*}
The completion of $\mathcal{S}(\C^d)$ for the norm induced by the $\qt{\theta}$-valued inner product $\inner{\cdot}{\cdot}{\HeisenbergMod{}{}}$ is a Hilbert module over $\qt{\theta}$ called the \emph{Heisenberg module} $\HeisenbergMod{\theta}{p,q,d}$.

We define the projective unitary representation $\sigma_{p,q,d}$ of $\R^2$ for the $2$-cocycle $\cocycle{\eth}$ by setting:
\begin{equation*}
  \forall x,y \in \R^2 \quad \sigma_{\eth,d}^{x,y} = \alpha_{\eth,d}^{x,y,\frac{xy}{2}} \text{.}
\end{equation*}
This representation is the Schr{\"o}dinger representation for the ``Planck constant'' $\eth$. We then proved in \cite{Latremoliere17a} that Heisenberg modules over quantum tori give rise to {\gQVB s}.

\begin{theorem}[{\cite[Theorem 6.11]{Latremoliere17a}}]\label{HeisenbergMod-dnorm-thm}
Let $\HeisenbergMod{\theta}{p,q,d}$ be the Heisenberg module over $\qt{\theta}$ for some $\theta\in\R$,  $p\in\Z$, $q \in \N\setminus\{0\}$ and $d\in q\N\setminus\{0\}$. Let $\eth = \theta - \frac{p}{q}$ and assume $\eth \not= 0$. Let $\|\cdot\|$ be a norm on $\R^2$. If we set, for all $\xi \in \HeisenbergMod{\theta}{p,q,d}$:
\begin{equation*}
\CDN_\theta^{p,d,q}(\xi) = \sup\left\{ \norm{\xi}{\HeisenbergMod{\theta}{p,q,d}}, \frac{\left\|\sigma_{\eth,d}^{x,y}\xi - \xi\right\|_{\HeisenbergMod{\theta}{p,q,d}}}{2\pi|\eth|\|(x,y)\|}  : (x,y) \in \R^2\setminus\{0\} \right\} \text{,}
\end{equation*}
and for all $a\in\qt{\theta}$:
\begin{equation*}
\Lip_\theta(a) = \sup\left\{\frac{\left\|\beta_\theta^{\exp(ix),\exp(iy)}a - a\right\|_{\qt{\theta}}}{\|(x,y)\|} : (x,y)\in\R^2\setminus\{0\} \right\}
\end{equation*}
then $\left(\HeisenbergMod{\theta}{p,q,d}, \inner{\cdot}{\cdot}{\HeisenbergMod{\theta}{p,q,d}}, \CDN_\theta^{p,d,q}, \qt{\theta}, \Lip_\theta\right)$ is a Leibniz {\gQVB}.
\end{theorem}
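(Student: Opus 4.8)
The plan is to verify the five clauses of Definition~\ref{qvb-dev} for $\left(\HeisenbergMod{\theta}{p,q,d}, \inner{\cdot}{\cdot}{\HeisenbergMod{\theta}{p,q,d}}, \CDN_\theta^{p,d,q}, \qt{\theta}, \Lip_\theta\right)$, with essentially all the difficulty concentrated in the compactness of the $\CDN_\theta^{p,d,q}$-ball. Three of the clauses are quick. That $(\qt{\theta},\Lip_\theta)$ is a {\Lqcms} is Rieffel's theorem \cite{Rieffel98a}: $\Lip_\theta$ is the seminorm attached to the ergodic isometric action $\beta_\theta$ of the compact group $\T^2$, hence is lower semicontinuous and its associated {\MongeKant} metrizes the weak-$\ast$ topology on $\StateSpace(\qt{\theta})$; the Leibniz inequality follows because each $\beta_\theta^{z,w}$ is a $\ast$-automorphism, so $\beta_\theta^{z,w}(ab)-ab = \beta_\theta^{z,w}(a)\bigl(\beta_\theta^{z,w}(b)-b\bigr) + \bigl(\beta_\theta^{z,w}(a)-a\bigr)b$ with $\norm{\beta_\theta^{z,w}(a)}{\qt{\theta}}=\norm{a}{\qt{\theta}}$, and one then passes to Jordan and Lie products. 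That $\HeisenbergMod{\theta}{p,q,d}$ is a left Hilbert $\qt{\theta}$-module is the content of its construction recalled above, and the bound $\CDN_\theta^{p,d,q}(\xi) \geq \norm{\xi}{\HeisenbergMod{\theta}{p,q,d}} = \sqrt{\norm{\inner{\xi}{\xi}{\HeisenbergMod{\theta}{p,q,d}}}{\qt{\theta}}}$ is immediate, since $\norm{\xi}{\HeisenbergMod{\theta}{p,q,d}}$ is one of the quantities over which the supremum defining $\CDN_\theta^{p,d,q}(\xi)$ is taken.

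Both Leibniz inequalities (clauses 4 and 5) rest on one covariance identity, which I would extract first from the Weyl form of the Heisenberg commutation relations. Conjugating the module representation by the Schr\"odinger flow multiplies $\varpi_{p,q,\eth,d}^{n,m}$ by a phase linear in $(x,y)$ and in $(n,m)$; since $\qt{\theta}$ is generated by $U_\theta$ and $V_\theta$, and $\sigma_{\eth,d}^{x,y}$ is isometric on the module, this produces continuous group homomorphisms $\mu,\nu\colon\R^2\to\T^2$ of ``speed'' $2\pi|\eth|$ --- which is exactly why that constant appears in $\CDN_\theta^{p,d,q}$ --- such that
\begin{equation*}
\sigma_{\eth,d}^{x,y}(a\xi) = \beta_\theta^{\mu(x,y)}(a)\,\sigma_{\eth,d}^{x,y}(\xi) \qquad\text{and}\qquad \inner{\sigma_{\eth,d}^{x,y}\xi}{\sigma_{\eth,d}^{x,y}\eta}{\HeisenbergMod{\theta}{p,q,d}} = \beta_\theta^{\nu(x,y)}\!\left(\inner{\xi}{\eta}{\HeisenbergMod{\theta}{p,q,d}}\right)
\end{equation*}
for all $a\in\qt{\theta}$ and $\xi,\eta\in\HeisenbergMod{\theta}{p,q,d}$. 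For the inner Leibniz inequality I would write $\sigma_{\eth,d}^{x,y}(a\xi)-a\xi = \beta_\theta^{\mu(x,y)}(a)\bigl(\sigma_{\eth,d}^{x,y}\xi-\xi\bigr) + \bigl(\beta_\theta^{\mu(x,y)}(a)-a\bigr)\xi$, estimate the first term by $\norm{a}{\qt{\theta}}\,\norm{\sigma_{\eth,d}^{x,y}\xi-\xi}{\HeisenbergMod{\theta}{p,q,d}}$ and the second by $\norm{\beta_\theta^{\mu(x,y)}(a)-a}{\qt{\theta}}\,\norm{\xi}{\HeisenbergMod{\theta}{p,q,d}} \leq 2\pi|\eth|\,\Lip_\theta(a)\,\|(x,y)\|\,\CDN_\theta^{p,d,q}(\xi)$, divide by $2\pi|\eth|\|(x,y)\|$ and take the supremum, treating the plain module-norm term in $\CDN_\theta^{p,d,q}$ separately via $\norm{a\xi}{\HeisenbergMod{\theta}{p,q,d}} \leq \norm{a}{\qt{\theta}}\,\norm{\xi}{\HeisenbergMod{\theta}{p,q,d}}$. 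For the modular Leibniz inequality I would expand $\beta_\theta^{\nu(x,y)}\!\bigl(\inner{\xi}{\eta}{\HeisenbergMod{\theta}{p,q,d}}\bigr) - \inner{\xi}{\eta}{\HeisenbergMod{\theta}{p,q,d}} = \inner{\sigma_{\eth,d}^{x,y}\xi-\xi}{\sigma_{\eth,d}^{x,y}\eta}{\HeisenbergMod{\theta}{p,q,d}} + \inner{\xi}{\sigma_{\eth,d}^{x,y}\eta-\eta}{\HeisenbergMod{\theta}{p,q,d}}$, apply the Cauchy--Schwarz inequality for Hilbert modules together with the isometry of $\sigma_{\eth,d}^{x,y}$, observe that $\eth\neq 0$ makes $\nu$ a linear isomorphism so that the supremum over $(x,y)$ recovers all of $\Lip_\theta$, and finally pass to real and imaginary parts using $\Lip_\theta(a^\ast)=\Lip_\theta(a)$; this yields exactly the factor $2$.

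The main obstacle is clause 3: the compactness in $\HeisenbergMod{\theta}{p,q,d}$ of $B := \left\{\xi : \CDN_\theta^{p,d,q}(\xi)\leq 1\right\}$. Since $\CDN_\theta^{p,d,q}$ is a supremum of $\norm{\cdot}{\HeisenbergMod{\theta}{p,q,d}}$-continuous functions it is lower semicontinuous, so $B$ is closed in the complete space $\HeisenbergMod{\theta}{p,q,d}$, and it suffices to show $B$ totally bounded. I would argue in two stages. First, a mollification: integrating the Schr\"odinger flow against a Gaussian $g_t$ on $\R^2$ of unit mass gives, uniformly for $\xi\in B$, $\norm{\int g_t(x,y)\,\sigma_{\eth,d}^{x,y}\xi\dif x\dif y - \xi}{\HeisenbergMod{\theta}{p,q,d}} \leq 2\pi|\eth|\int g_t(x,y)\,\|(x,y)\|\dif x\dif y = O(\sqrt{t})$, so it is enough to prove each mollified family precompact. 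Second, a Rellich-type step: from $\norm{\xi}{L^2(\R,\C^d)}^2 = \inner{\xi}{\xi}{\HeisenbergMod{\theta}{p,q,d}}(0,0) \leq \norm{\inner{\xi}{\xi}{\HeisenbergMod{\theta}{p,q,d}}}{\qt{\theta}}$ (the left-hand side being the value of the canonical trace) we see $B$ maps into the set of $L^2(\R,\C^d)$-functions that are bounded and $2\pi|\eth|$-Lipschitz for the Schr\"odinger flow; since $\eth\neq 0$, testing $y=0$ and $x=0$ places this set in a bounded subset of $H^1(\R,\C^d)\cap L^2\!\left((1+s^2)\dif s,\C^d\right)$, hence precompact in $L^2(\R,\C^d)$. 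The remaining --- and I expect hardest --- point is to upgrade $L^2$-precompactness to $\norm{\cdot}{\HeisenbergMod{\theta}{p,q,d}}$-precompactness on $B$: for this I would use $\norm{\inner{\zeta}{\zeta}{\HeisenbergMod{\theta}{p,q,d}}}{\qt{\theta}} \leq \sum_{(n,m)\in\Z^2}\left|\inner{\varpi_{p,q,\eth,d}^{n,m}\zeta}{\zeta}{L^2(\R,\C^d)}\right|$ together with a harmonic-analytic estimate --- based on the Hermite-function description of the Schr\"odinger representation --- showing that $\CDN_\theta^{p,d,q}$-control forces rapid decay of these coefficients uniformly over differences of elements of $B$, and an interpolation inequality making the sum small once $\norm{\zeta}{L^2(\R,\C^d)}$ is small. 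Combining these three ingredients produces a finite $\varepsilon$-net for $B$ in $\HeisenbergMod{\theta}{p,q,d}$, hence compactness, completing the verification.
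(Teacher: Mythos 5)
This theorem is not proved in the present paper: it is quoted verbatim from \cite[Theorem 6.11]{Latremoliere17a}, so your proposal has to be measured against that reference. Your treatment of the easy axioms and of the two Leibniz inequalities is sound and is essentially the mechanism used there: the covariance identities $\sigma_{\eth,d}^{x,y}(a\xi)=\beta_\theta^{\mu(x,y)}(a)\,\sigma_{\eth,d}^{x,y}\xi$ and $\inner{\sigma_{\eth,d}^{x,y}\xi}{\sigma_{\eth,d}^{x,y}\eta}{\HeisenbergMod{\theta}{p,q,d}}=\beta_\theta^{\nu(x,y)}\inner{\xi}{\eta}{\HeisenbergMod{\theta}{p,q,d}}$ do hold and do what you want (one small bookkeeping caveat: with the cocycle conventions of this paper, $\nu$ and $\mu$ interchange the two coordinates of $\R^2$ up to a sign, so the identification of the constant $2\pi|\eth|\|(x,y)\|$ with the displacement entering $\Lip_\theta$ has to be checked against the chosen norm rather than asserted as a ``speed'').

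The genuine gap is in your compactness argument, precisely at the step you flag as hardest. You propose to dominate $\norm{\inner{\zeta}{\zeta}{\HeisenbergMod{\theta}{p,q,d}}}{\qt{\theta}}$ by $\sum_{(n,m)\in\Z^2}\left|\inner{\varpi^{n,m}\zeta}{\zeta}{L^2(\R,\C^d)}\right|$ and to claim that ``$\CDN_\theta^{p,q,d}$-control forces rapid decay of these coefficients uniformly over differences of elements of $B$.'' It does not: finiteness of $\CDN_\theta^{p,q,d}(\zeta)$ is a first-order condition (it controls, in module norm hence in $L^2$, one derivative and one weight of $\zeta$ --- exactly your $H^1\cap L^2((1+s^2)\dif s)$ bound), and such regularity yields only a decay of order $\max(|n|,|m|)^{-1}$ for the coefficients $\inner{\varpi^{n,m}\zeta}{\zeta}{L^2(\R,\C^d)}$. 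That is not summable over $\Z^2$; indeed, for a general element of the $\CDN$-ball the inner product $\inner{\zeta}{\zeta}{\HeisenbergMod{\theta}{p,q,d}}$ need not lie in $\ell^1(\Z^2)$ at all, so your dominating series can diverge and the upgrade from $L^2$-precompactness to module-norm precompactness collapses. Rapid decay is only available after smoothing: the vectors $\sigma_{\eth,d}^{g_t}\xi$ are Schwartz with seminorms controlled by $\norm{\xi}{L^2}\leq\norm{\xi}{\HeisenbergMod{\theta}{p,q,d}}$, so your harmonic-analytic estimate must be applied to the mollified family, not to $B$ itself. This is also where your route diverges from the cited proof: \cite{Latremoliere17a} does not use a Rellich step at all, but replaces the mollifier $\sigma^{g}_{\eth,d}$ by $\sigma^{h}_{\eth,d}$ with $h$ a finite Laguerre sum (Thangavelu's summability theorem \cite{Thangavelu93}), whose range is a fixed finite-dimensional span of Hermite functions; the $\CDN$-ball then lies within $\varepsilon$ of a bounded subset of a finite-dimensional subspace, hence is totally bounded --- the very mechanism the present paper reuses in Lemmas~\ref{same-dim-approx-lemma} and~\ref{anchors-lemma}. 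Either adopt that finite-rank truncation, or redirect your decay estimate to the smoothed vectors; as written, the argument does not close.
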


\emph{We will use the notation of Theorem (\ref{HeisenbergMod-dnorm-thm}) for the remainder of this paper.} In particular, we fix a norm $\norm{\cdot}{}$ on $\R^2$ for the remainder of this paper.

\medskip

We note that while in general, the action of the Heisenberg groups on Heisenberg modules is not by isometry of our D-norms, we can still find some related operators which act almost as isometries. This begins the new material for this paper.

\begin{notation}
  If $E$ is a metric space, $x\in E$ and $r > 0$, the closed ball of center $x$ and radius $r$ in $E$ is denoted by $E[x,r]$.
\end{notation}

\begin{lemma}\label{contraction-lemma}
  Let $p\in\Z$, $q\in \N$ and $d\in\N\setminus\{0\}$. For all $\varepsilon > 0$ there exists $\delta > 0$ such that, if $f : \R^2 \rightarrow \R_+$ is an integrable function supported on a $\R^2[0,\delta]$ such that $\iint_{\R^2} f \leq 1$, and if $\eth \not= 0$, then for all $\xi \in \mathcal{S}(\C^d)$, we have:
\begin{equation*}
\CDN_{\frac{p}{q} + \eth}^{p,q,d} \left( \sigma_{\eth,d}^{f} \xi \right) \leq (1 + \varepsilon) \CDN_{\frac{p}{q} + \eth}^{p,q,d}(\xi)\text{.}
\end{equation*}
\end{lemma}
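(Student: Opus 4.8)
The plan is to reduce the estimate directly to the definition of the D-norm in Theorem \ref{HeisenbergMod-dnorm-thm}, using two structural features of the Schr\"odinger representation $\sigma_{\eth,d}$: each $\sigma_{\eth,d}^{x,y}$ is a unitary of the Hilbert module $\HeisenbergMod{\frac{p}{q}+\eth}{p,q,d}$ (in particular an isometry for its norm), a property inherent to the Heisenberg module construction (see \cite{ConnesRieffel87,Latremoliere17a}); and $\sigma_{\eth,d}$ is a projective representation of $\R^2$ for the cocycle $\cocycle{\eth}$, so conjugating one of its operators by another produces only a scalar of modulus one. By $\sigma_{\eth,d}^f\xi$ I understand the Bochner integral $\iint_{\R^2}f(x,y)\,\sigma_{\eth,d}^{x,y}\xi\dif x\dif y$, a well-defined element of $\HeisenbergMod{\frac{p}{q}+\eth}{p,q,d}$ since $(x,y)\mapsto\sigma_{\eth,d}^{x,y}\xi$ is norm-continuous and $f$ is integrable with compact support. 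As each $\sigma_{\eth,d}^{x,y}$ is an isometry and $\iint_{\R^2}f\leq 1$, the triangle inequality for Bochner integrals gives $\norm{\sigma_{\eth,d}^f\xi}{\HeisenbergMod{\frac{p}{q}+\eth}{p,q,d}}\leq\norm{\xi}{\HeisenbergMod{\frac{p}{q}+\eth}{p,q,d}}\leq\CDN_{\frac{p}{q}+\eth}^{p,q,d}(\xi)$, so the norm term in $\CDN_{\frac{p}{q}+\eth}^{p,q,d}(\sigma_{\eth,d}^f\xi)$ is harmless, and the whole statement reduces to bounding $\bigl\|\sigma_{\eth,d}^{a,b}\sigma_{\eth,d}^f\xi-\sigma_{\eth,d}^f\xi\bigr\|_{\HeisenbergMod{\frac{p}{q}+\eth}{p,q,d}}\big/\bigl(2\pi|\eth|\norm{(a,b)}{}\bigr)$ for each $(a,b)\in\R^2\setminus\{0\}$.

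Fix such an $(a,b)$. Since $\sigma_{\eth,d}^{a,b}$ is bounded it commutes with the Bochner integral, and since $\sigma_{\eth,d}$ is a $\cocycle{\eth}$-projective representation, a direct computation from \eqref{alpha-action-eq} and the definition of $\sigma_{\eth,d}$ shows that for every $(x,y)\in\R^2$
\begin{equation*}
\bigl(\sigma_{\eth,d}^{x,y}\bigr)^{-1}\,\sigma_{\eth,d}^{a,b}\,\sigma_{\eth,d}^{x,y}=\psi(x,y)\,\sigma_{\eth,d}^{a,b}\text{,}
\end{equation*}
where $\psi(x,y)=\exp\bigl(2 i\pi\eth\,B((a,b),(x,y))\bigr)$ is the commutator phase of $\cocycle{\eth}$, with $B$ a fixed bilinear form on $\R^2$ independent of $\eth$, $p$, $q$, $d$. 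In particular there is a constant $C>0$ depending only on the fixed norm $\norm{\cdot}{}$ on $\R^2$ with $\bigl|\psi(x,y)-1\bigr|\leq 2\pi|\eth|\,\bigl|B((a,b),(x,y))\bigr|\leq C|\eth|\,\norm{(a,b)}{}\,\norm{(x,y)}{}$. Substituting into the integral yields
\begin{equation*}
\sigma_{\eth,d}^{a,b}\sigma_{\eth,d}^f\xi-\sigma_{\eth,d}^f\xi=\iint_{\R^2}f(x,y)\,\sigma_{\eth,d}^{x,y}\bigl(\psi(x,y)\,\sigma_{\eth,d}^{a,b}\xi-\xi\bigr)\dif x\dif y\text{.}
\end{equation*}

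I would then estimate the integrand pointwise on $\operatorname{supp}f$. Writing $\psi(x,y)\sigma_{\eth,d}^{a,b}\xi-\xi=\bigl(\sigma_{\eth,d}^{a,b}\xi-\xi\bigr)+\bigl(\psi(x,y)-1\bigr)\sigma_{\eth,d}^{a,b}\xi$, using that $\sigma_{\eth,d}^{x,y}$ and $\sigma_{\eth,d}^{a,b}$ are module isometries, the defining inequality $\bigl\|\sigma_{\eth,d}^{a,b}\xi-\xi\bigr\|_{\HeisenbergMod{\frac{p}{q}+\eth}{p,q,d}}\leq 2\pi|\eth|\norm{(a,b)}{}\,\CDN_{\frac{p}{q}+\eth}^{p,q,d}(\xi)$, the bound $\norm{\xi}{\HeisenbergMod{\frac{p}{q}+\eth}{p,q,d}}\leq\CDN_{\frac{p}{q}+\eth}^{p,q,d}(\xi)$, and $\norm{(x,y)}{}\leq\delta$ on $\operatorname{supp}f$, one gets for every such $(x,y)$
\begin{equation*}
\Bigl\|\sigma_{\eth,d}^{x,y}\bigl(\psi(x,y)\sigma_{\eth,d}^{a,b}\xi-\xi\bigr)\Bigr\|_{\HeisenbergMod{\frac{p}{q}+\eth}{p,q,d}}\leq\bigl(2\pi+C\delta\bigr)|\eth|\,\norm{(a,b)}{}\,\CDN_{\frac{p}{q}+\eth}^{p,q,d}(\xi)\text{.}
\end{equation*}
Integrating against $f$ (using $\iint_{\R^2}f\leq 1$), dividing by $2\pi|\eth|\norm{(a,b)}{}$, taking the supremum over $(a,b)\in\R^2\setminus\{0\}$, and combining with the first paragraph, we obtain $\CDN_{\frac{p}{q}+\eth}^{p,q,d}(\sigma_{\eth,d}^f\xi)\leq\bigl(1+\tfrac{C\delta}{2\pi}\bigr)\CDN_{\frac{p}{q}+\eth}^{p,q,d}(\xi)$; choosing $\delta$ so that $C\delta\leq 2\pi\varepsilon$ finishes the proof.

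I do not expect a genuine obstacle; the one point deserving care is the bookkeeping for the commutator phase $\psi$ in the second step --- checking that conjugation of $\sigma_{\eth,d}^{a,b}$ by $\sigma_{\eth,d}^{x,y}$ returns the same operator times a scalar that (i) carries exactly one factor $|\eth|$, matching the $|\eth|$ in the denominator of the D-norm, and (ii) vanishes to first order as $(x,y)\to 0$. These two facts are precisely what allow $\delta$ to be chosen uniformly in $\eth$, $p$, $q$, and $d$. As a side remark, the displayed estimates show in passing that $\sigma_{\eth,d}^f$ maps $\mathcal{S}(\C^d)$ into the domain of $\CDN_{\frac{p}{q}+\eth}^{p,q,d}$, so every quantity above is finite.
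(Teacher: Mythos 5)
Your proposal is correct and follows essentially the same route as the paper's proof: both use the $\cocycle{\eth}$-commutator phase of the projective representation, bound $|\text{phase}-1|$ by a constant times $|\eth|\,\norm{(a,b)}{}\,\norm{(x,y)}{}$ via equivalence of norms on $\R^2$, split the integrand into the phase-error term and the term $\sigma_{\eth,d}^{x,y}(\sigma_{\eth,d}^{a,b}\xi-\xi)$, and use that the $\sigma_{\eth,d}^{x,y}$ act isometrically on the Heisenberg module together with $\iint f\leq 1$. The only difference is cosmetic bookkeeping (which factor carries the phase), so your constant $C\delta/(2\pi)$ plays exactly the role of the paper's $k\delta$.
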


\begin{proof}
Let $\eth \in \R\setminus\{0\}$ and let $\theta = \frac{p}{q} + \eth$. We first record that for all $(x,y,u), (z,w,v) \in \HeisenbergGroup$:
\begin{equation*}
\alpha_{\eth,d}^{x,y,u}\alpha_{\eth,d}^{z,w,v} = \exp\left( 2i\pi \eth (x w - z y) \right)\alpha_{\eth,d}^{z,w,v}\alpha_{\eth,d}^{x,y,u} \text{.}
\end{equation*}

Next, we denote by $\|\cdot\|_2$ the standard Euclidean norm on $\R^2$ and, since $\R^2$ is finite dimensional, we can find $k > 0$ such that $\|\cdot\|_2 \leq k \|\cdot\|$. For all $x,y,z,w \in \R$, we then compute:

\begin{align*}
\left| \exp\left( 2i\pi \eth (x w - z y) \right) - 1 \right| &= 2|\sin(\eth\pi(x w - z y))|
\leq 2\pi|\eth| |x w - y z| \\
&\leq 2\pi |\eth| \|(x,y)\|_2 \|(z,w)\|_2
\leq 2\pi k |\eth| \|(x,y)\| \|(z,w)\|_2 \text{.}
\end{align*}

Let $\varepsilon > 0$. Let $f$ be a nonnegative integrable function supported on $\R^2[0,\delta]$ with $\iint_{\R^2} f \leq 1$.

For all $(z,w) \in \R^2[0,\delta]$ and $(x,y) \in \R^2$ with $\|(x,y)\|\leq \delta = \frac{\varepsilon}{k}$, we compute:
\begin{align*}
&\, \left\| \alpha_{\eth,d}^{x,y,\frac{x y}{2}}\left(\iint_K  f(z,w)\alpha_{\eth,d}^{z,w,\frac{z w}{2}}\xi \, dx dy\right) - \iint f(z,w)\alpha_{\eth,d}^{z,w,\frac{z w}{2}}\xi \, dx dy \right\|_{\HeisenbergMod{\theta}{p,q,d}}\\
&\leq \iint_K f(z,w) \left| \exp\left( 2i\pi \eth (x y - z w) \right) - 1 \right| \left\| \alpha_{\eth,d}^{z,w,\frac{z w}{2}}\alpha_{\eth,d}^{x,y,\frac{x y}{2}}\xi  \right\|_{\HeisenbergMod{\theta}{p,q,d}} \, dz dw \\
&+ \iint_K f(z,w) \left\|\alpha_{\eth,d}^{z,w,\frac{ z w }{2}}\left(\alpha_{\eth,d}^{x,y,\frac{x y}{2}} \xi - \xi\right)\right\|_{\HeisenbergMod{\theta}{p,q,d}} \, dz dw \\
&\leq 2\pi k |\eth| \delta \|(x,y)\| \iint_K f \|\xi\|_{\HeisenbergMod{\theta}{p,q,d}} + \iint_K f 2\pi |\eth| \|(x,y)\| \CDN_{\theta}^{p,q,d}(\xi) \\
&\leq \left(k\delta + 1 \right) 2\pi|\eth| \|(x,y)\| \CDN_\theta^{p,q,d}(\xi) \\
&\leq (\varepsilon + 1)2\pi|\eth| \|(x,y)\| \CDN_\theta^{p,q,d}(\xi) \text{.}
\end{align*}

By definition (see Theorem (\ref{HeisenbergMod-dnorm-thm})), and since $\sigma_{\eth,d}$ acts by isometries for $\HeisenbergMod{\theta}{p,q,d}$, we now have:
\begin{equation*}
\begin{split}
\CDN(\sigma_{\eth,d}^{f}\xi) &= \sup\left\{ \norm{\sigma_{\eth,d}^{f}\xi}{\HeisenbergMod{\theta}{p,q,d}}, \frac{\left\|\alpha_{\eth,d}^{x,y,\frac{x y}{2}} \sigma_{\eth,d}^{f} \xi - \sigma_{\eth,d}^{f} \xi \right\|_{\HeisenbergMod{\theta}{p,q,d}}}{2\pi |\eth| \|(x,y)\|} : x,y\in\R\setminus\{0\} \right\} \\
&\leq (\varepsilon + 1) \CDN_\theta^{p,q,d}(\xi) \text{.}
\end{split}
\end{equation*}

Our lemma is now proven.
\end{proof}

\section{A continuous field of D-norms}

The essential observation which we use to prove our main result, Theorem (\ref{main-qt-thm}), is that the D-norms on Heisenberg modules form continuous families:

\begin{theorem}\label{d-norm-cont-thm}
Let $p\in\Z$, $q\in\N\setminus\{0\}$ and $d\in q\N\setminus\{0\}$. Let $\xi \in \mathcal{S}(\C^d)$. Let $\rho : \R\setminus\{\frac{p}{q}\} \rightarrow \R\setminus\{0\}$ be a continuous function. If $(\theta_k)_{k\in\N}$ is a sequence in $\R\setminus\left\{\frac{p}{q}\right\}$ converging to $\theta_\infty\not=\frac{p}{q}$  then:
\begin{equation*}
\lim_{k\rightarrow\infty} \CDN_{\theta_k}^{p,q,d}(\xi_{\rho(\theta_k)}) = \CDN_{\theta_\infty}^{p,q,d}(\xi_{\rho(\theta_\infty)}) \text{,}
\end{equation*}
where $\xi_{\rho(\theta)} : t \in \R \mapsto \xi(\rho(\theta) t)$ for all $\theta\not=\frac{p}{q}$.
\end{theorem}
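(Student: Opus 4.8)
The plan is to prove the stronger claim that $\theta\mapsto\CDN_\theta^{p,q,d}(\xi_{\rho(\theta)})$ is continuous on $\R\setminus\{p/q\}$; the theorem then follows by composing with $k\mapsto\theta_k$. Fix $\theta_\infty\neq p/q$ and a compact neighborhood $I$ of $\theta_\infty$ inside $\R\setminus\{p/q\}$; it suffices to establish continuity on $I$, on which the ``Planck constant'' $\eth=\theta-p/q$ and the values $\rho(\theta)$ stay in a fixed compact subset of $\R\setminus\{0\}$. We may assume $\CDN_{\theta_\infty}^{p,q,d}(\xi_{\rho(\theta_\infty)})<\infty$, since otherwise the characterization of the domain of the D-norm in \cite{Latremoliere17a} shows every $\CDN_{\theta_k}^{p,q,d}(\xi_{\rho(\theta_k)})$ is infinite as well; under this assumption $\xi$ --- hence every $\xi_{\rho(\theta)}$ together with all its derivatives --- is a Schwartz $\C^d$-valued function, and the Fréchet seminorms of $\xi_{\rho(\theta)}$ vary continuously in $\theta$ and so stay bounded over $I$.

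The whole argument rests on a continuity statement for Heisenberg inner products. \textbf{Claim:} if $\Lambda$ is a compact metric space, $\lambda\mapsto\vartheta(\lambda)\in I$ is continuous, and $\lambda\mapsto\psi_\lambda\in\mathcal{S}(\C^d)$ is continuous for the Fréchet topology, then $\lambda\mapsto\norm{\inner{\psi_\lambda}{\psi_\lambda}{\HeisenbergMod{\vartheta(\lambda)}{p,q,d}}}{\qt{\vartheta(\lambda)}}$ is continuous on $\Lambda$. To see this, note that the $(n,m)$-coefficient of $\inner{\psi_\lambda}{\psi_\lambda}{\HeisenbergMod{\vartheta(\lambda)}{p,q,d}}$, namely $\inner{\varpi_{p,q,\eth(\lambda),d}^{n,m}\psi_\lambda}{\psi_\lambda}{L^2(\R,\C^d)}$, depends continuously on $\lambda$ (the operator $\varpi^{n,m}$ is a translation by $\eth(\lambda)m$, a modulation by $s\mapsto\mathrm{e}^{2\pi ins}$ and a cyclic shift on $\C^d$, all continuous in $\lambda$, and the defining integrands are dominated uniformly), and is rapidly decreasing in $(n,m)$ uniformly over $\lambda\in\Lambda$, because correlating two Schwartz functions after such a translation--modulation decays faster than any power of $(n,m)$ with constants controlled by finitely many Fréchet seminorms of $\psi_\lambda$ and the bounds on $\eth$. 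Thus $\lambda\mapsto\big(\inner{\varpi_{p,q,\eth(\lambda),d}^{n,m}\psi_\lambda}{\psi_\lambda}{L^2}\big)_{(n,m)}$ is a continuous map into $\ell^1(\Z^2)$. Now the quantum tori $(\qt{\theta})_{\theta\in\R}$ form a continuous field of C*-algebras (see \cite{Rieffel90}; see \cite{Latremoliere15c} for the finer propinquity convergence), $c\mapsto\sum_{(n,m)}c_{n,m}U_\theta^nV_\theta^m$ is a contraction $\ell^1(\Z^2)\to\qt{\theta}$ which is a continuous section for fixed $c$, and therefore $\lambda\mapsto\inner{\psi_\lambda}{\psi_\lambda}{\HeisenbergMod{\vartheta(\lambda)}{p,q,d}}$ is a continuous path in the total space of that field; the Claim follows since the norm is continuous on that total space.

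Write $\CDN_\theta^{p,q,d}(\xi_{\rho(\theta)})=\max\{A(\theta),B(\theta)\}$, where $A(\theta)=\norm{\xi_{\rho(\theta)}}{\HeisenbergMod{\theta}{p,q,d}}$ and, parametrizing $\R^2\setminus\{0\}$ by $(x,y)=(tu,tv)$ with $t>0$ and $(u,v)$ in the unit sphere $S$ of our fixed norm, $B(\theta)=\sup\{\widetilde{Q}_\theta(t,u,v):t>0,(u,v)\in S\}$ with $\widetilde{Q}_\theta(t,u,v)=\norm{\sigma_{\eth,d}^{tu,tv}\xi_{\rho(\theta)}-\xi_{\rho(\theta)}}{\HeisenbergMod{\theta}{p,q,d}}/(2\pi|\eth|t)$. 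Applying the Claim with $\psi=\xi_{\rho(\theta)}$ shows that $A$ is continuous on $I$. For $B$: the $\mathcal{S}(\C^d)$-valued map $(\theta,t,u,v)\mapsto\big(\sigma_{\eth,d}^{tu,tv}\xi_{\rho(\theta)}-\xi_{\rho(\theta)}\big)/t$ is continuous on $I\times(0,\infty)\times S$ and, because $\xi$ is Schwartz, extends continuously to $t=0$ with value the directional derivative $G_\theta(u,v)=2\pi iu\,s\,\xi_{\rho(\theta)}+\eth v\,\xi_{\rho(\theta)}'\in\mathcal{S}(\C^d)$; applying the Claim with $\Lambda$ an appropriate compact parameter set then shows $\widetilde{Q}_\theta$ extends to a jointly continuous function on $I\times[0,\infty)\times S$, whose supremum over $t\geq 0$ and $(u,v)\in S$ still equals $B(\theta)$. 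Since $\widetilde{Q}_\theta(t,u,v)\leq 2\norm{\xi_{\rho(\theta)}}{\HeisenbergMod{\theta}{p,q,d}}/(2\pi|\eth|t)\to 0$ as $t\to\infty$ uniformly over $\theta\in I$ and $(u,v)\in S$, the supremum defining $B(\theta)$ may, up to an error uniform over $I$ and as small as desired, be taken over the compact set $[0,R]\times S$; a supremum over a compact set of a jointly continuous function is continuous, so $B$ is continuous on $I$. Hence $\max\{A,B\}$ is continuous on $I$, which proves the theorem.

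\textbf{The main obstacle} is the Claim and the reduction of $B$ to it. One must check that the $\Z^2$-coefficients of the relevant Heisenberg inner products are rapidly decreasing \emph{uniformly} over the parameters (in particular locally uniformly in the spatial variable $(x,y)$), and, crucially, that the difference quotients $\big(\sigma_{\eth,d}^{tu,tv}\xi_{\rho(\theta)}-\xi_{\rho(\theta)}\big)/t$ converge in $\mathcal{S}(\C^d)$ --- not merely in $L^2(\R,\C^d)$ --- as $t\to 0$; this is where the smoothness of $\xi$ is used, and it is what forces $\widetilde{Q}$ to be continuous down to $t=0$ and hence $B$ to be upper semicontinuous. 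Once these uniformities are established, continuity of the C*-norms is a formal consequence of the classical fact that the quantum tori form a continuous field.
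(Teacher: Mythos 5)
Your proposal is correct and follows essentially the same route as the paper: both extend the difference quotients continuously to $t=0$, establish continuity of the resulting $\ell^1(\Z^2)$-valued inner products jointly in all parameters, convert this into continuity of the varying C*-norms (your continuous-field ``Claim'' playing the role of Lemma 3.2 and Proposition 3.5 of the Heisenberg-module paper cited there), and finish by taking a supremum over a compact parameter set. The differences are only organizational --- the paper restricts the supremum to $\|(x,y)\|\leq 1$ and uses uniform continuity plus a finite $\varepsilon$-net of parameters where you invoke joint continuity of the norm and a uniform tail bound in $t$, and it verifies the needed convergence via explicit domination estimates rather than Fr\'echet-topology continuity --- and these are routine to fill in.
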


To prove Theorem (\ref{d-norm-cont-thm}), our first step is to reformulate the expression of our D-norms.

\begin{lemma}\label{omega-cont-lemma}
Let $p\in\Z$, $q\in\N\setminus\{0\}$ and $d \in q\N\setminus\{0\}$. Let $r : \R\setminus\{0\} \rightarrow \R \setminus\{0\}$ be a continuous function.

If $\xi \in \mathcal{S}(\C^d)$, then for all $(x,y) \in \R^2$ with $\|(x,y)\| = 1$, $\eth \in\R\setminus\{0\}$, the function:
\begin{equation*}
t\in(0,\infty) \mapsto \omega_{x,y,t,\eth} = \frac{\sigma_{\eth,d}^{tx,ty}\xi_{r(\eth)} - \xi_{r(\eth)}}{2\pi\eth t}
\end{equation*}
where $\xi_{r(\eth)} : t\in\R\mapsto \xi(r(\eth)t)$, can be extended by continuity at $0$. Moreover, for all $\eth \in\R\setminus\{0\}$:
\begin{multline*}
  \CDN_{\eth + \frac{p}{q}}^{p,q,d} (\xi_{r(\eth)}) = \\
  \sup\left\{ \norm{\xi_{r(\eth)}}{\HeisenbergMod{\theta}{p,q,d}}, \left\|\omega_{x,y,t,\eth}\right\|_{\HeisenbergMod{\theta}{p,q,d}} : (x,y) \in \R^2,\|(x,y)\| = 1, t \in [0,1] \right\}
\end{multline*}
and
\begin{equation*}
(x,y,t,\eth) \in \R^2[0,1]\times\R\times(\R\setminus\{0\}) \mapsto \inner{\omega_{x,y,t,\eth}}{\omega_{x,y,t,\eth}}{\HeisenbergMod{\eth + \frac{p}{q}}{p,q,d}}
\end{equation*}
is continuous to $\left(\ell^1(\Z^2),\|\cdot\|_{\ell^1(\Z^2)}\right)$.
\end{lemma}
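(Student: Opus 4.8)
The plan is to reduce everything to explicit formulas for the $\qt{\theta}$-valued inner products on Heisenberg modules and then invoke standard facts about Schwartz functions. First I would write out, for $\xi,\zeta\in\mathcal{S}(\C^d)$, the coefficient $\inner{\xi}{\zeta}{\HeisenbergMod{\theta}{p,q,d}}(n,m)=\inner{\varpi_{p,q,\eth,d}^{n,m}\xi}{\zeta}{L^2(\R,\C^d)}$ as an explicit oscillatory integral involving the Schr\"odinger-type operators $\sigma_{\eth,d}$ (translation by $\eth y$, modulation by $x$, together with the finite-dimensional twist $u_{p,q}^n v_{p,q}^m$ and a scalar phase). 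Two elementary observations then do most of the work: (i) $\sigma_{\eth,d}^{x,y}\xi$ depends on $(x,y,\eth)$ continuously in the Schwartz topology on $\mathcal{S}(\C^d)$ (translation and modulation are jointly continuous there), and hence so does the rescaling $\xi_{r(\eth)}$, by continuity of $r$ away from $0$; and (ii) for $\xi,\zeta$ ranging over a set that stays bounded in the Schwartz topology, the map $(\xi,\zeta)\mapsto\inner{\xi}{\zeta}{\HeisenbergMod{\theta}{p,q,d}}$ is continuous into $\ell^1(\Z^2)$, because the Schwartz seminorms of $\xi$ and $\zeta$ give, after integration by parts in the oscillatory integral, a bound of the form $C(1+|n|+|m|)^{-N}$ on the $(n,m)$ coefficient, uniformly on bounded sets — this is exactly the standard ``$\mathcal{S}\times\mathcal{S}\to\ell^1$'' estimate underlying the fact that $\mathcal{S}(\C^d)$ lands in the smooth subalgebra of $\qt{\theta}$.

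Next I would handle the removable singularity at $t=0$. Write $\sigma_{\eth,d}^{tx,ty}=\alpha_{\eth,d}^{tx,ty,t^2xy/2}$; since $\alpha_{\eth,d}$ is a strongly continuous, and in fact (on $\mathcal{S}(\C^d)$) smooth, representation, the curve $t\mapsto\sigma_{\eth,d}^{tx,ty}\xi_{r(\eth)}$ is differentiable in the Schwartz topology with derivative at $t=0$ equal to the corresponding generator applied to $\xi_{r(\eth)}$ (still in $\mathcal{S}(\C^d)$). Therefore the difference quotient $\omega_{x,y,t,\eth}=\tfrac{1}{2\pi\eth t}\bigl(\sigma_{\eth,d}^{tx,ty}\xi_{r(\eth)}-\xi_{r(\eth)}\bigr)$ converges in $\mathcal{S}(\C^d)$ as $t\to 0^+$, with limit $\tfrac{1}{2\pi\eth}\,\partial_t|_{t=0}\sigma_{\eth,d}^{tx,ty}\xi_{r(\eth)}$; call this $\omega_{x,y,0,\eth}$. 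Joint continuity of $(x,y,t,\eth)\mapsto\omega_{x,y,t,\eth}$ in $\mathcal{S}(\C^d)$ on $\R^2[0,1]\times[0,1]\times(\R\setminus\{0\})$ follows from smoothness of the representation plus continuity of $r$ and the scalar factor $\tfrac{1}{2\pi\eth t}$ (the only delicate point being uniformity near $t=0$, which the differentiability in the Schwartz topology supplies via a Taylor-with-remainder argument). Composing with observation (ii) above gives continuity of $(x,y,t,\eth)\mapsto\inner{\omega_{x,y,t,\eth}}{\omega_{x,y,t,\eth}}{\HeisenbergMod{\eth+p/q}{p,q,d}}$ into $\ell^1(\Z^2)$, which is the last assertion.

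For the formula for $\CDN_{\eth+\frac pq}^{p,q,d}(\xi_{r(\eth)})$, the reduction from ``$(x,y)\in\R^2\setminus\{0\}$, arbitrary $t>0$'' to ``$\|(x,y)\|=1$, $t\in[0,1]$'' is two separate normalizations. Homogeneity lets me restrict the supremum to $\|(x,y)\|=1$ (replace $(x,y)$ by $(x,y)/\|(x,y)\|$ and absorb the scalar into $t$), so that the D-norm becomes $\sup\{\|\xi_{r(\eth)}\|,\ \|\omega_{x,y,t,\eth}\| : \|(x,y)\|=1,\ t>0\}$. Then I must show the supremum over $t>0$ is attained (or at least not decreased) on $t\in[0,1]$: for $t\ge 1$, since $\sigma_{\eth,d}$ acts by isometries on $\HeisenbergMod{\theta}{p,q,d}$, the triangle inequality gives $\|\sigma_{\eth,d}^{tx,ty}\xi_{r(\eth)}-\xi_{r(\eth)}\|\le 2\|\xi_{r(\eth)}\|\le 2\pi|\eth|\,t\cdot\tfrac{\|\xi_{r(\eth)}\|}{\pi|\eth|}$; one checks this bound is dominated by the $t\le 1$ contributions — more carefully, for $t\ge 1$ write $\|\omega_{x,y,t,\eth}\|\le \tfrac1t\max_{s\in[0,1]}\|\tfrac{\sigma^{sx,sy}\xi_{r(\eth)}-\xi_{r(\eth)}}{2\pi\eth s}\|\cdot s\cdot\tfrac1t\le\ldots$; the clean way is a subadditivity/averaging estimate showing $t\mapsto \tfrac1t\|\sigma_{\eth,d}^{tx,ty}\xi-\xi\|$ has its supremum on $(0,1]$, using that $\sigma_{\eth,d}^{tx,ty}-1=\sum$ of $n$ isometric conjugates of $\sigma_{\eth,d}^{(t/n)x,(t/n)y}-1$ for integer $n$. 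I expect this last normalization — controlling the large-$t$ regime of the difference quotient — to be the only genuinely non-routine point; everything else is the Schwartz-topology continuity machinery, which is standard once the explicit integral formula for $\inner{\cdot}{\cdot}{\HeisenbergMod{\theta}{p,q,d}}$ is written down.
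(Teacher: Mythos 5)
Your proposal is correct and reaches all three conclusions, but it packages the analysis differently from the paper. For the extension at $t=0$ and the $\ell^1(\Z^2)$-continuity, the paper works with explicit pointwise formulas: it writes the limit $\omega_{x,y,0,\eth}(s)=\frac{ixs}{\eth}\xi(r(\eth)s)+y r(\eth)\frac{\xi'(r(\eth)s)}{2\pi}$, proves uniform convergence on $\R$ together with dominating bounds of the form $\frac{M}{1+s^2}$ for $\omega$ and its first two derivatives, uniformly over the compact parameter set $\|(x,y)\|=1$, $t\in[0,1]$, $|\eth|\in[\eth_-,\eth_+]$, and then feeds this data into \cite[Lemma 3.2]{Latremoliere17a} to obtain joint continuity of the inner products into $\ell^1(\Z^2)$. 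You instead phrase everything in the Schwartz topology (differentiability of $t\mapsto\sigma_{\eth,d}^{tx,ty}\xi_{r(\eth)}$ there, plus continuity of the module pairing from Schwartz-bounded sets into $\ell^1(\Z^2)$ via integration by parts); that pairing estimate is precisely what the paper outsources to the reference, so your route is legitimate, but in a self-contained write-up you would have to prove it, and you should note that the rapid-decay constants degrade as $\eth\to0$, so the required uniformity only holds for $\eth$ ranging in a compact subset of $\R\setminus\{0\}$ — which is why the paper fixes $[\eth_-,\eth_+]$ at the outset. Where you genuinely add something is the D-norm formula: the paper simply asserts the passage from the supremum over all $(x,y)\neq 0$ in Theorem (\ref{HeisenbergMod-dnorm-thm}) to the supremum over $\|(x,y)\|\leq 1$, whereas you carry out both normalizations, homogeneity to reach the unit sphere and a telescoping bound $\left\|\sigma_{\eth,d}^{n u}\xi-\xi\right\|_{\HeisenbergMod{\theta}{p,q,d}}\leq n\left\|\sigma_{\eth,d}^{u}\xi-\xi\right\|_{\HeisenbergMod{\theta}{p,q,d}}$ to dispose of $t>1$. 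One point you should make explicit there: $\sigma_{\eth,d}$ is only a projective representation, but along a fixed ray the cocycle is trivial, since $\cocycle{\eth}\bigl(t(x,y),s(x,y)\bigr)=\exp\bigl(i\pi\eth\,ts(xy-xy)\bigr)=1$, so $t\mapsto\sigma_{\eth,d}^{tx,ty}$ is an honest one-parameter group acting by module isometries and the telescoping needs no phase corrections; with that one-line verification (and discarding your earlier, garbled $t\geq1$ estimate in favor of the telescoping), your argument for the supremum formula is complete, and including $t=0$ is harmless since $\omega_{x,y,0,\eth}$ is a norm limit of the $\omega_{x,y,t,\eth}$ with $t\in(0,1]$.
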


\begin{proof}
We begin by setting a domain over which we shall study our functions $\omega$. For our purpose, we choose some arbitrary $\eth_\infty \not= 0$ and then $0 < \eth_- < \eth_+$ such that $|\eth_\infty| \in (\eth_-,\eth_+)$. We set:
\begin{equation*}
\Omega = \left\{ (x,y,\eth) \in \R^3 : \|(x,y)\| = 1, |\eth| \in [\eth_-, \eth_+] \right\}
\end{equation*}
while:
\begin{equation*}
\Sigma = \{(x,y,t,\eth) \in \R^4 : (x,y,\eth) \in \Omega, t \in [0,1] \}
\end{equation*}
and:
\begin{equation*}
\Sigma_\ast = \{(x,y,t,\eth) \in \R^4 : (x,y,\eth) \in \Omega, t \in (0,1] \} \text{.}
\end{equation*}

Let $\xi \in \mathcal{S}(\C^d)$ and let $M_0 > 0$ be chosen so that for all $s \in \R$:
\begin{equation*}
\max\{\|\xi^{(n)}(s)\|_{\C^d}, \|s\xi^{(n)}(s)\|_{\C^d}  : n \in \{0,1,2,3,4\} \} \leq \frac{M_0}{1 + s^2} \text{.}
\end{equation*}

Now, $r$ is continuous on $[\eth_-,\eth_+]$, and thus there exists $R_- , R_+ > 0$ such that $R_- \leq r(\eth) \leq R_+$ for all $\eth \in [\eth_-, \eth_+]$. Thus for all $s\in\R$:
\begin{equation*}
\max\{\|\xi^{(n)}(r(\eth)s)\|_{\C^d}  : n \in \{0,1,2,3,4\} \} \leq \frac{M_0}{1 + R_-^2 s^2}
\end{equation*}
and
\begin{equation*}
\max\left\{ \|s\xi^{(n)}(r(\eth) s)\|_{\C^d} : n \in \{0,1,2,3,4\} \right\} \leq \frac{M_0}{R_-(1 + R_-^2 s^2)}\text{.}
\end{equation*}

We thus conclude that there exists $M > 0$ such that for all $\eth \in \R$ with $|\eth| \in [\eth_-,\eth_+]$ and for all $s\in\R$:
\begin{equation*}
\max\{\|\xi^{(n)}(r(\eth)s)\|_{\C^d}, \|s\xi^{(n)}(r(\eth)s)\|_{\C^d}  : n \in \{0,1,2,3,4\} \} \leq \frac{M}{1 + s^2} \text{.}
\end{equation*}

We first extend $(x,y,t,\eth)\in\Sigma_\ast\mapsto\omega_{x,y,t,\eth}$ to $\Sigma$ by continuity. We set, for all $(x,y)\in \R^2$ with $\|(x,y)\|=1$ and $\eth \in \R\setminus\{0\}$:
\begin{equation*}
\omega_{x,y,0,\eth} : s\in \R\mapsto \frac{i x s}{\eth} \xi(r(\eth)s) +  y r(\eth)\frac{\xi'(r(\eth)s)}{2\pi} \text{.}
\end{equation*}

Now, for all $(x,y,t,\eth) \in \Sigma_\ast$, we observe that for all $s\in\R$:

\begin{equation*}
\begin{split}
\omega_{x,y,t,\eth}(s) &= \frac{\exp(i\pi \left(t^2 \eth xy + 2 tx s \right))\xi_{r(\eth)}(s + \eth ty) - \xi_{r(\eth)}(s)}{2\pi\eth t}\\
&= \xi_{r(\eth)}(s + \eth yt) \frac{\exp(i\pi ( \eth t^2 xy + 2 tx s)) - 1}{2\pi\eth t} + \frac{\xi_{r(\eth)}(s + \eth ty) - \xi_{r(\eth)}(s)}{2\pi\eth t}\text{.}
\end{split}
\end{equation*}
Since $\xi$ is a Schwartz function, thus differentiable, we have for all $(x,y)\in\R^2$ with $\|(x,y)\|=1$ and $\eth \in \R\setminus\{0\}$:
\begin{equation*}
\lim_{t\rightarrow 0^+} \omega_{x,y,t,\eth}(s) = \omega_{x,y,0,\eth}(s) \text{.}
\end{equation*}

We observe that our statement thus far is about pointwise convergence of the family of functions $(\omega_{x,y,t,\eth})_{t > 0}$ to $\omega_{x,y,0,\eth}$ for fixed $x,y\in\R^2$ with $\|(x,y)\|=1$ and $\eth \not=0$. This is different from the notion of convergence in the $C^\ast$-Hilbert norm. To obtain convergence for the Heisenberg $C^\ast$-Hilbert norm, we now proceed to establish some regularity properties for $\omega$, in order to apply \cite[Lemma 3.2]{Latremoliere17a}.

By Proposition \cite[Proposition 4.8]{Latremoliere17a}, we already know that there exists $M_1 > 0$ such that for all $(x,y,t,\eth) \in \Sigma_\ast$ and $s\in\R$:
\begin{equation*}
\|\sigma_{\eth,d}^{tx,ty}\xi_{r(\eth)}(s) - \xi_{r(\eth)}(s)\|_{\C^d} \leq \frac{M_1 t\|(x,y,\frac{1}{2}t xy)\|_1}{1 + s^2}\text{,}
\end{equation*}
where $\|\cdot\|_1$ is the usual $1$-norm on $\R^3$.

The map $(x,y,t) \in \R^3 \mapsto M_1 t (x,y,\frac{t x y}{2})$ is continuous on the compact set $K = \{ (x,y,t) \in \R^2 : \|(x,y)\|=1, t\in[0,1]\}$, so there exists $M_2 > 0$ such that:
\begin{equation*}
\forall (x,y,t) \in K \quad \left\|M_1 t \left(x,y,\frac{ t x y }{2}\right)\right\|_1 \leq M_2\text{.}
\end{equation*}

Thus:
\begin{equation*}
\|\omega_{x,y,t,\eth}(s)\|_{\C^d} \leq \frac{t \|(x,y,\frac{1}{2}t xy)\|_1}{2\pi|\eth| t} \frac{M_1}{1+s^2} \leq \frac{\frac{M_2}{2\pi|\eth|}}{1 + s^2}\leq \frac{\frac{M_2}{2\pi|\eth|_-}}{1 + s^2}  \text{.}
\end{equation*}

On the other hand, by assumption and since $\{(x,y)\in\R^2:\norm{(x,y)}{}=1\}$ is compact, there exists $M_2'$ such that if $\norm{(x,y)}{} = 1$ then:
\begin{equation*}
\|\omega_{x,y,0,\eth}(s)\|_{\C^d} \leq |x|\frac{M}{|\eth|(1 + s^2)} + |y|\frac{R_+ M}{2\pi(1+s^2)} \leq \frac{M_2'}{1+s^2} \text{.}
\end{equation*}

In summary, there exists $M_3 = \max\left\{\frac{M_2}{2\pi\eth_-}, M_2' \right\} > 0$ such that for all $(x,y,\eth,t) \in \Sigma$ and all $s\in\R$:
\begin{equation*}
\left\|\omega_{x,y,t,\eth}(s) \right\|_{\C^d} \leq \frac{M_3}{1 + s^2} \text{.}
\end{equation*}

By construction, $(\omega_{x,y,t,\eth})_{t>0}$ converges pointwise to $\omega_{x,y,0,\eth}$ as $t\rightarrow 0$. We now prove that this convergence is indeed uniform. 

We begin with the following computation for all $(x,y,t,\eth) \in \Sigma_\ast$ and for all $s\in\R$:
\begin{align*}
&\, \|\omega_{x,y,t,\eth}(s) - \omega_{x,y,0,\eth}(s)\|_{\C^d} \\
&= \left| \frac{\sigma_{\eth,d}^{tx,ty}\xi_{r(\eth)}(s) - \xi_{r(\eth)}(s)}{2\pi t \eth } - \left(\frac{i x s}{\eth} \xi_{r(\eth)}(s) + y \frac{r(\eth)}{2\pi} \xi'(r(\eth)s)\right) \right| \\
&\leq \left| \left(\frac{\exp(i\pi ( \eth t^2 xy + 2s t x) ) - 1}{2\pi t \eth} - \frac{i x s}{\eth}\right) \xi_{r(\eth)}(s + \eth t y)\right| \\
&\quad + \left| \frac{i x s}{\eth}\left( \xi_{r(\eth)}(s + \eth t y) -  \xi_{r(\eth)}(s) \right) \right| \\
&\quad + \frac{1}{2\pi}\left| \frac{\xi_{r(\eth)}(s + \eth ty) - \xi_{r(\eth)}(s)}{\eth t} - y r(\eth) \xi'(r(\eth)s)\right|\text{.}
\end{align*}

We now bound each of the three terms in the above expression. We will obtain constants below, denoted by $M_4,M_5,\ldots$ that are uniform in $x,y,\eth,t$ and $s$. We first note that for all $(x,y,t,\eth)\in\Sigma_\ast$, by the mean value theorem, if $g:t \mapsto \exp(i\pi(t^2 \eth xy + 2 tx s))$, then there exists $t_c \in [0,t]$ such that:
\begin{align*}
\left| \frac{\exp(i\pi(\eth t^2 xy + 2 tx s)) - 1}{2\pi t \eth} - x \frac{i s}{\eth} \right|
&\leq \left| \frac{g(t) - g(0)}{2\pi t \eth} - g'(0) \right| = \left|\frac{t}{4\pi\eth} g''(t_c)  \right| \\
&= \frac{|t|}{2\pi\eth}\left| (i \pi \eth x y - 2 \pi^2 (t_c \eth x y + x s)^2)\exp(2i\pi s t_c x)\right|\\
&= \frac{|t|\left|i \pi \eth x y - 2 \pi^2 (t_c \eth x y + x s)^2\right|}{2\pi\eth} \text{.}
\end{align*}

Once again, by continuity over the compact $\Omega\times[0,1]$, there exists $M_4 > 0$ such that for all $(x,y,\eth,t) \in \Omega\times[0,1]$ and for all $s\in \R$, we obtain (after expanding the square and by the triangle inequality):
\begin{equation*}
 \frac{\left|i \pi \eth x y - 2 \pi^2 (t_c \eth x y + x s)^2\right|}{2\pi\eth} \leq M_4(1+|s|)^2 \text{.}
\end{equation*}
Therefore, for all $s\in \R$ and $(x,y,\eth,t)\in\Omega\times[0,1]$:
\begin{equation*}
  \left| \left(\frac{\exp(i\pi ( \eth t^2 xy + 2s t x) ) - 1}{2\pi t \eth} - x \frac{i s}{\eth}\right) \xi_{r(\eth)}(s + \eth t y)\right| \\
  \leq |t| \frac{M_4 M (1 + |s|)^2}{1+(s+\eth t y)^2} \text{.}
\end{equation*}

Again by compactness, $(x,y,\eth,t)\in\Omega\times[0,1]\mapsto \eth t y$ is bounded  by some $M_4'$. If $s > M_4'$ then $(s+\eth t y)^2 \geq (s-M_4')^2$ while if $s<-M_4'$ then $(s+\eth t y)^2 \geq (s+M_4')^2$. Thus, there exists $M_5 > 0$ such that if $|s| > M_4'$ then:
\begin{equation*}
\frac{M M_4 \left(1 + |s|\right)^2}{\left(1 + (s+\eth t y)^2\right)} \leq \max\left\{ \frac{M M_4 \left(1 + |s|\right)^2}{\left(1 + (s-M_4')^2\right)},\frac{M M_4 \left(1 + |s|\right)^2}{\left(1 + (s+M_4')^2\right)} \right\} \leq M_5 \text{.}
\end{equation*}

The function $(x,y,t,s)\in\R^4\mapsto \frac{M M_4 \left(1 + |s|\right)^2}{1 + (s+\eth t y)^2}$ is continuous, and thus it is bounded by some $M_6 > 0$ on the compact $\Sigma\times[-M_4',M_4']$. Thus there exists $M_7 = \max\{M_5,M_6\} > 0$ such that for all $(x,y,\eth,t) \in \Sigma_\ast$ and $s\in\R$, we have:
\begin{equation*}
\left| \left(\frac{\exp(2i\pi tx s) - 1}{\eth t} - \frac{2i\pi x}{\eth}\right) \xi_{r(\eth)}(s + t \eth y)\right| \leq  M_7|t|\text{.}
\end{equation*}

Consequently, if $|t| < \delta_1 = \frac{\varepsilon}{3 M_7}$, then:
\begin{equation}
\left| \left(\frac{\exp(i\pi (t^2 \eth x y + 2 tx s)) - 1}{\eth t} - \frac{2i\pi x}{\eth}\right) \xi_{r(\eth)}(s + t \eth y)\right| < \frac{\varepsilon}{3} \text{.}
\end{equation}

We now turn to the second term we wish to bound. As we noted before, by compactness, there exists $M_8 > 0$ such that if $\norm{(x,y)}{}=1$ and $|\eth|\in[\eth_-,\eth_+]$ then $\frac{|x|}{\eth} \leq M_8$ and $|\eth y| \leq M_8$. On the other hand, the function $s\in\R\mapsto s\xi(s)$ is uniformly continuous on $\R$ as a Schwartz function. There exists $\delta_2 > 0$ such that for all $0 < t < \delta_2$ and for all $s\in\R$, we have $|(s+t)\xi(s + t) - \xi(s)| < \frac{\varepsilon}{6 M_8}$. 

Moreover, since $\xi$ is bounded on $\R$, we may choose $\delta_2 > 0$ small enough so that $\delta_2\sup_{s\in\R}\|\xi(s)\|_{\C^d} < \frac{\varepsilon}{6 M_8}$. 

Let now $\delta_3 = \frac{\delta_2}{M_8 R_+}$. If $|t| < \delta_3$ then $|r(\eth) \eth t y| < \delta_2$ and therefore:
\begin{align*}
&\, \left\|x\frac{i s}{\eth}\xi_{r(\eth)}(s + \eth t y) - x\frac{is}{\eth}\xi_{r(\eth)}(s)\right\|_{\C^d}\\
&\quad\quad \leq M_8\left( \|(r(\eth)s + r(\eth) \eth t y)\xi(r(\eth)s + r(\eth)\eth t y) - r(\eth)s\xi(r(\eth) s)\|_{\C^d} \right. \\
&\quad\quad\quad \left. + |r(\eth) \eth t y|\|\xi(r(\eth)(s + \eth t y))\|_{\C^d}\right) \\
&\quad\quad \leq M_8 \left(\frac{\varepsilon}{6 M_8} + \delta_2\sup_{s\in\R}\|\xi(s)\|_{\C^d}\right) \leq \frac{\varepsilon}{6} + \frac{\varepsilon}{6} = \frac{\varepsilon}{3}\text{.}
\end{align*}

We thus deduce that for all $(x,y,t,\eth) \in \Sigma$, if $|t| < \delta_3$, then:
\begin{equation}
\sup_{s\in\R} |x\frac{i s}{\eth}\xi_{r(\eth)}(s + \eth t y) - x\frac{is}{\eth}\xi_{r(\eth)}(s)| < \frac{\varepsilon}{3}\text{.}
\end{equation}

Last, since $\xi'$ is also a Schwartz function and in particular, also uniformly continuous on $\R$, there exists $\delta_4 > 0$ such that $|\xi'(s + r) - \xi'(s)| < \frac{2\pi\eth_-\varepsilon}{3 R_+ M_8}$ for all $0\leq r  < \delta_4$. Thus for all $(x,y,t,\eth) \in \Sigma$ and $s\in\R$, if $|t| < \delta_5 = \frac{\delta_4}{R_+ M_8}$ and $\norm{(x,y)}{}=1$, $s\in\R$, $t\in[0,1]$ and $|\eth| \in [\eth_-,\eth_+]$ then (since $|y|\leq \frac{M_8}{\eth_-}$):
\begin{multline*}
\left| \frac{\xi_{r(\eth)}(s + t\eth y) - \xi_{r(\eth)}(s)}{\eth} - y t r(\eth) \xi'(r(\eth)s)\right| \\ \leq |r(\eth)| \int_{0}^{t} |y||\xi'(r(\eth)(s + r\eth y)) - \xi'(r(\eth)s)| \,dr \leq t \frac{2\pi\varepsilon}{3} \text{.}
\end{multline*}

Thus for all $(x,y,t,\eth)\in\Sigma_\ast$ with $|t|<\delta_4$, we have:
\begin{equation*}
\sup_{s\in\R} \frac{1}{2\pi}\left| \frac{\xi_{r(\eth)}(s + \eth t y) - \xi_{r(\eth)}(s)}{\eth t} - y r(\eth) \xi'(r(\eth)s)\right| < \frac{\varepsilon}{3} \text{.}
\end{equation*}

In conclusion, for all $(x,y,t,\eth) \in \Sigma_\ast$ with $0 < t < \min\{\delta_1,\delta_3,\delta_5\}$ and for all $s\in\R$, we have established:
\begin{equation*}
\sup_{s\in\R} |\omega_{x,y,t,\eth}(s) - \omega_{x,y,0,\eth}(s)| < \varepsilon \text{.}
\end{equation*}

In other words, setting for all $t \in (0,1]$:
\begin{equation*}
\mathsf{f}_t : (x,y,\eth,s) \in \Omega \times\R \mapsto \omega_{x,y,t,\eth}(s)
\end{equation*}
the function $t\in(0,1]\mapsto f_t$ converges uniformly on $\Omega\times \R$ to:
\begin{equation*}
\mathsf{f}_0 : (x,y,\eth,s) \in \Omega\times\R \mapsto \omega_{x,y,0,\eth}(s)
\end{equation*}
when $t$ goes to $0$.

Since $(x,y,t,\eth,s) \in \Sigma_\ast\times\R \mapsto \mathsf{f}_t(x,y,\eth,s)$ and $\mathsf{f}_0$ are both continuous, we deduce, in particular, that:
\begin{equation*}
(x,y,t,\eth,s) \in \Sigma\times\R \mapsto \omega_{x,y,t,\eth}(s)
\end{equation*}
is (jointly) continuous. 

\worknote{Indeed if $(x_n,y_n,t_n,\eth_n,s_n)_{n\in\N} \in \left(\alpha\times\R\right)^\N$ is a sequence converging to $(x_\infty,y_\infty,t_\infty,\eth_\infty,s_\infty)$, then $(x_\infty,y_\infty,t_\infty,\eth_\infty,s_\infty) \in \alpha\times\R$ and:
\begin{itemize}
\item if $t_\infty > 0$ then $(x_\infty,y_\infty,t_\infty,\eth_\infty,s_\infty) \in \alpha_\ast\times \R$, then we simply use the continuity of $\mathsf{h}$,
\item if $t_\infty = 0$ then:
\begin{equation*}
\begin{split}
\|f_{t_n}(x_n,y_n,\eth_n,s_n) - f_0(x_\infty,y_\infty,\eth_\infty,s_\infty)\| &\leq
\|f_{t_n}(x_n,y_n,\eth_n,s_n) - f_0(x_n,y_n,\eth_n,s_n)\| \\
&\quad + \|f_{t_\infty}(x_n,y_n,\eth_n,s_n) - f_0(x_\infty,y_\infty,\eth_\infty,s_\infty)\| \\
&\leq \sup_{(x,y,\eth,s) \in \Omega\times\R}\|f_{t_n}(x,y,\eth,s) - f_0(x,y,\eth,s)\| \\
&\quad + \|f_{t_\infty}(x_n,y_n,\eth_n,s_n) - f_0(x_\infty,y_\infty,\eth_\infty,s_\infty)\|\text{.}
\end{split}
\end{equation*}
The first term on the right hand side converges to $0$ by uniform convergence of $(f_t)_{t \in (0,K]}$ to $f_0$ on $\Omega\times\R$; the second term converges to $0$ by continuity of $f_0$.
\end{itemize}
}

The entire reasoning up to now may be applied equally well to $\xi^{(n)}$ for $n\in\{1,2\}$ --- as one may check that $\omega^{(n)}$ is indeed obtained by substituting $\xi$ with $\xi^{(n)}$. 

Therefore, we are now able to apply \cite[Lemma 3.2]{Latremoliere17a} to conclude that:
\begin{equation*}
(x,y,t,\theta) \in \Sigma \mapsto \inner{\omega_{x,y,t,\eth}}{\omega_{x,y,t,\eth}}{\HeisenbergMod{\theta}{p,q,d}} \in \left(\ell^1(\Z^2),\|\cdot\|_{\ell^1(\Z^2)}\right)
\end{equation*}
is continuous as desired (to make notations clear: we pick a sequence $(\theta_n)_{n\in\N}$ converging to some $\theta$, and we choose $(x_n)_{n\in\N}\in\R^\N$, $(y_n)_{n\in\N}\in\R^\N$, and $t_n\in [0,1]^\N$ such that for all $n\in\N$, we have $(x_n,y_n,t_n,\theta_n-\frac{p}{q}) \in \Sigma$, and then we set, in the notations of \cite[Lemma 3.2]{Latremoliere17a}, the functions $\xi_n = f_{t_n}(x_n,y_n,\eth_n,\cdot)$ and $\xi_\infty = f_{t_\infty}(x_\infty,y_\infty,\eth_\infty,\cdot)$).

We conclude our proof by observing that by Theorem (\ref{HeisenbergMod-dnorm-thm}), $\CDN_\theta^{p,q,d}(\xi_{r(\eth)})$ is given by:
\begin{align*}
&\,\sup\left\{ \norm{\xi_{r(\eth)}}{\HeisenbergMod{\theta}{p,q,d}}, \frac{\left\| \sigma_{\eth,d}^{x,y}\xi_{r(\eth)} - \xi_{r(\eth)} \right\|_{\HeisenbergMod{\theta}{p,q,d}}}{2\pi|\eth| \|(x,y)\| } : (x,y)\in\R^2, \|(x,y)\|\leq 1\right\}\\
&= \sup\left\{ \norm{\xi_{r(\eth)}}{\HeisenbergMod{\theta}{p,q,d}}, \left\| \omega_{x,y,t,\eth} \right\|_{\HeisenbergMod{\theta}{p,q,d}} : \|(x,y)\| = 1, t \in [0,1] \right\}
\end{align*}
as stated.
\end{proof}

We now prove that D-norms on Heisenberg modules form continuous fields.

\begin{proof}[Proof of Theorem (\ref{d-norm-cont-thm})]
The result is trivial if $\xi = 0$, which is equivalent to $\CDN_{\vartheta}^{p,q,d}(\xi) = 0$ for all $\vartheta \in \R$ with $\vartheta\not = \frac{p}{q}$.

Now, fix $\theta \in \R\setminus\{\frac{p}{q}\}$ and set $\eth = \theta-\frac{p}{q}$. We shall prove that $\vartheta\mapsto\CDN_\vartheta^{p,q,d}(\xi_{\rho(\vartheta)})$ is continuous at $\theta$.

Let $\delta_1 > 0$ such that $I = [\eth-\delta_1,\eth+\delta_1] \subseteq \R\setminus\left\{0\right\}$. Let $r : \hslash \in I \rightarrow \rho\left(\hslash+\frac{p}{q}\right)$.

Let:
\begin{equation*}
\Upsilon = \left\{ (x,y,t)\in\R^3 : \|(x,y)\| = 1, t \in [0, 1] \right\}\text{.}
\end{equation*}

Let $\xi \in \mathcal{S}(\C^d)$. We use the notations of Lemma (\ref{omega-cont-lemma}) and we set:
\begin{equation*}
  \nu : (x,y,t,\hslash) \in \Upsilon\times I \mapsto \inner{\omega_{x,y,t,\hslash}}{\omega_{x,y,t,\hslash}}{\HeisenbergMod{\vartheta+\frac{p}{q}}{p,q,d}}
\end{equation*}

Let $\hslash\in I$ and let $\vartheta=\hslash + \frac{p}{q}$. By Lemma (\ref{omega-cont-lemma}), $\CDN_{\vartheta}^{p,q,d}(\xi_{r(\hslash)})$ is the maximum of $\norm{\xi_{r(\hslash)}}{\HeisenbergMod{\vartheta}{p,q,d}}$ and the square root of:
\begin{align*}
\mathrm{E}_{\hslash}^{p,q,d}(\xi_{r(\hslash)})^2
&= \sup\left\{\left\|\upsilon(x,y,t,\hslash)\right\|_{\qt{\vartheta}} : (x,y,t) \in \Upsilon \right\}\\
&\leq \sup\left\{\left\|\upsilon(x,y,t,\hslash)\right\|_{\ell^1(\Z^2)} : (x,y,t) \in \Upsilon \right\}
\end{align*}
where $\Upsilon$ is a compact subset of $\R^3$, independent of $\hslash$. By \cite[Proposition 3.5]{Latremoliere17a}, the function $\hslash \in I \mapsto \norm{\xi_{r(\hslash)}}{\HeisenbergMod{\vartheta}{p,q,d}}$ is continuous, so it is sufficient to show that $\hslash\in I \mapsto \mathrm{E}_\hslash^{p,q,d}(\xi_{r(\hslash)})$.

Now, since $\nu$ is continuous in $\left(\ell^1(\Z^2),\|\cdot\|_{\ell^1(\Z^2)}\right)$ by Lemma (\ref{omega-cont-lemma}), it is uniformly continuous on the compact $\Upsilon_2 = \Upsilon\times I$. 

Let $\|(z,w,s,\hslash)\|_\infty = \max\{|z|,|w|,|s|,|\hslash|\}$ for all $(z,w,s,\hslash) \in \R^4$.

Let $\delta_2 > 0$ be chosen so that for all $(x,y,t,s),(z,w,u,v) \in \Upsilon_2$ with $\|(x,y,t,s) - (z,w,u,v)\|_\infty < \delta_2$ we have:
\begin{equation*}
|\nu(x,y,t,s) - \nu(z,w,u,v)| < \frac{\varepsilon}{4}\text{.}
\end{equation*}

Let $G \subseteq \Upsilon_2$ be a $\delta_2$-dense finite subset of $\Upsilon_2$ in the sense of the norm $\|\cdot\|_\infty$. Let:
\begin{equation*}
F = \left\{ (z,w,r) \in \R^3 : \exists \hslash \in \R \quad (z,w,r,\hslash) \in G \right\} \text{.}
\end{equation*}
By construction, $F$ is finite and $\delta_2$-dense in $\Upsilon$ for the restriction of $\|\cdot\|_\infty$ to $\R^3 \sim \R^3\times\{0\}$.

Fix any $\hslash \in I$ and set $\vartheta = \hslash + \frac{p}{q}$. Now, let $(x,y,t) \in \Upsilon$. There exists $(z,w,s)\in F$ with $\max\{|x-z|,|y-w|, |t-s| \} < \delta_2$. We then observe:

\begin{align*}
  \norm{\upsilon(x,y,t,\hslash)}{\qt{\vartheta}}
  &\leq \left|\norm{\upsilon(x,y,t,\hslash)}{\qt{\vartheta}} - \norm{\upsilon(z,w,s,\hslash)}{\qt{\vartheta}}\right| + \norm{\upsilon(z,w,s,\hslash)}{\qt{\vartheta}} \\
  &\leq \norm{\upsilon(x,y,t,\hslash)-\upsilon(z,w,s,\hslash)}{\qt{\vartheta}} + \norm{\upsilon(z,w,s,\hslash)}{\qt{\vartheta}} \\
  &\leq \norm{\upsilon(x,y,t,\hslash)-\upsilon(z,w,s,\hslash)}{\ell^1(\Z^2)} + \norm{\upsilon(z,w,s,\hslash)}{\qt{\vartheta}} \\
  &\leq \frac{\varepsilon}{4} + \norm{\upsilon(z,w,s,\hslash)}{\qt{\vartheta}}\text{.}
\end{align*}

Let $\mathsf{F}_\hslash^{p,q,d}(\eta)$ be given by:
\begin{equation*}
\mathsf{F}_\hslash^{p,q,d}(\eta) = \max\left\{ \norm{\omega_{z,w,s,\hslash}}{\qt{\hslash+\frac{p}{q}}} : (z,w,s) \in F \right\}
\end{equation*}
for all $\eta \in \mathcal{S}(\C^d)$.

We thus have proven that for all $\hslash \in I$, the following holds:
\begin{equation*}
\mathsf{F}_\hslash^{p,q,d}(\xi_{r(\hslash)})^2 \leq \mathsf{E}_\hslash^{p,q,d}(\xi_{r(\hslash)})^2 \leq \frac{\varepsilon}{4} + \mathsf{F}_\hslash^{p,q,d}(\xi_{r(\hslash)})^2\text{.}
\end{equation*}

Therefore:
\begin{equation}\label{d-norm-cont-eq1}
\begin{split}
\left|\mathsf{E}_\hslash^{p,q,d}(\xi_{r(\hslash)})^2 - \mathsf{E}_\eth^{p,q,d}(\xi_{r(\eth)})^2\right| &\leq \frac{\varepsilon}{2} + |\mathsf{F}_\hslash^{p,q,d}(\xi_{r(\hslash)})^2 - \mathsf{F}_\eth^{p,q,d}(\xi_{r(\eth)})^2| \text{.}
\end{split}
\end{equation}

Note that for any $\eta\in\mathcal{S}(\C^d)$, the quantity $\mathsf{F}_\hslash^{p,q,d}(\eta)$ is finite as the maximum of finitely many values. Also note that the set $F$ does not change with $\hslash \in I$ --- the only dependence of $\mathsf{F}_\hslash^{p,q,d}$ on $\hslash$ is via the choice of the quantum torus norm $\|\cdot\|_{\HeisenbergMod{\hslash+\frac{p}{q}}{p,q,d}}$. 

Now the key observation is that $\hslash \in I \mapsto \mathsf{F}_\hslash^{p,q,d}(\xi)$ is continuous. Fix $(z,w,s) \in F$. By \cite[Proposition 3.5]{Latremoliere17a}, the function:
\begin{equation*}
\hslash \in I \mapsto \left\| \frac{\sigma_{\eth,d}^{sz,sw}\xi_{r(\hslash)} - \xi_{r(\hslash)}}{2\pi|\hslash| s} \right\|_{\HeisenbergMod{\hslash+\frac{p}{q}}{p,q,d}}
\end{equation*}
is continuous (we note that $\hslash\in I \mapsto \sigma_{\hslash,d}^{s z,s w}\xi_{r(\hslash)} - \xi_{r(\hslash)}$ satisfies the necessary hypothesis, owing to $\xi$ being a Schwartz function and $r$ being continuous. The details follow similar lines to our proof of Lemma (\ref{omega-cont-lemma}) and we shall omit them this time around).

Thus $\hslash\in I \mapsto \mathsf{F}_\hslash^{p,q,d}(\xi_{r(\hslash)})$ is the maximum of finitely many continuous functions, and is therefore continuous as well.

Thus there exists $\delta_3 > 0$ such that for all $\hslash \in [\eth-\delta_3,\eth+\delta_3]$ we have:
\begin{equation*}
|\mathsf{F}_\hslash^{p,q,d}(\xi_{r(\hslash)})^2 - \mathsf{F}_\eth^{p,q,d}(\xi_{r(\eth)})^2| < \frac{\varepsilon}{2}\text{.}
\end{equation*} 

Thus if $\delta = \min\{\delta_1,\delta_3\} > 0$ then for all $\hslash\in[\eth-\delta,\eth+\delta]$ we have:
\begin{equation*}
\left|\mathsf{E}_\hslash^{p,q,d}(\xi_{r(\hslash)})^2 - \mathsf{E}_\eth^{p,q,d}(\xi_{r(\eth)})^2\right| < \varepsilon\text{.}
\end{equation*}
Since $\mathsf{E}_\hslash(\xi_{r(\hslash)}) \geq 0$ for all $\hslash\in[\eth - \delta,\eth+\delta]$ and $\sqrt[2]{\cdot}$ is a continuous function on $[0,\infty)$, we have shown that:
\begin{equation*}
\hslash \in I \mapsto \mathsf{E}_{\hslash}^{p,q,d}(\xi_{r(\hslash)})
\end{equation*}
is continuous at $\eth$. Therefore, as the maximum as two continuous functions by Lemma (\ref{omega-cont-lemma}) and \cite[Proposition 3.5]{Latremoliere17a}, the function $\vartheta \mapsto \CDN_{\vartheta}^{p,q,d}(\xi_{\rho(\vartheta)})$ is continuous at $\theta$ as well.
\end{proof}

\section{Convergence}

We now present our main convergence result for the modular propinquity:
We now conclude our paper with the main result of its second part, which demonstrates that the modular propinquity endows the moduli space of Heisenberg modules over quantum $2$-tori with a nontrivial geometry.

\begin{theorem}\label{main-qt-thm}
Let $\|\cdot\|$ be a norm on $\R^2$. For all $\theta \in \R$, we equip the quantum torus $\qt{\theta}$ with the L-seminorm:
\begin{equation*}
\Lip_\theta : a \in \sa{\A} \mapsto \sup\left\{ \frac{\left\| \beta_\theta^{\exp(i x),\exp(i y)}a - a\right\|_{\qt{\theta}}}{\|(x,y)\|} : (x,y) \in \R^2\setminus\{0\} \right\} \text{.}
\end{equation*}

For all $p\in\Z$, $q\in\N\setminus\{0\}$ and $d\in q\N\setminus\{0\}$ and for all $\theta\in \R\setminus\left\{\frac{p}{q}\right\}$, we endow the Heisenberg module $\HeisenbergMod{\theta}{p,q,d}$ with the D-norm:
\begin{multline*}
\CDN_\theta^{p,q,d} : \xi \in \HeisenbergMod{\theta}{p,q,d} \mapsto \\
\sup\left\{ \|\xi\|_{\HeisenbergMod{\theta}{p,q,d}}, \frac{\left\|\sigma_{\eth,d}^{x,y}\xi - \xi \right\|_{\HeisenbergMod{\theta}{p,q,d}}}{2\pi\left(\theta - \frac{p}{q}\right) \|(x,y)\| } : (x,y) \in \R^2\setminus\{0\} \right\} \text{.}
\end{multline*}

Let $p \in \Z$ and $q \in \N\setminus\{0\}$. Let $d \in q\N\setminus\{0\}$. For any $\theta \in \R\setminus\left\{\frac{p}{q}\right\}$, we have:
\begin{multline*}
\lim_{\vartheta\rightarrow\theta} \modpropinquity{}\left(\left(\HeisenbergMod{\vartheta}{p,q,d}, \inner{\cdot}{\cdot}{\HeisenbergMod{\vartheta}{p,q,d}}, \CDN_\vartheta^{p,q,d}, \qt{\vartheta}, \Lip_{\vartheta} \right),\right. \\
\left. \left(\HeisenbergMod{\theta}{p,q,d}, \inner{\cdot}{\cdot}{\HeisenbergMod{\theta}{p,q,d}}, \CDN_\theta^{p,q,d}, \qt{\theta}, \Lip_{\theta}\right) \right) = 0\text{.}
\end{multline*}
\end{theorem}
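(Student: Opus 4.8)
The plan is to bound the modular propinquity directly via \eqref{propinquity-eq}: for each $\varepsilon>0$ and each $\vartheta$ sufficiently close to $\theta$ I will exhibit a modular bridge $\gamma_\vartheta$ from the metrized quantum vector bundle over $\qt{\vartheta}$ to the one over $\qt{\theta}$ with $\bridgelength{\gamma_\vartheta}\leq C\varepsilon$ for a universal constant $C$. Writing $\eth=\theta-\frac{p}{q}\neq 0$ and $\hslash=\vartheta-\frac{p}{q}$, the bridge $\gamma_\vartheta$ must supply: a unital C*-algebra $\D$ with unital $\ast$-monomorphisms $\pi_\vartheta:\qt{\vartheta}\hookrightarrow\D$ and $\pi_\theta:\qt{\theta}\hookrightarrow\D$; a pivot $x\in\D$ with $\StateSpace_1(\D|x)\neq\emptyset$; and finite families of anchors $(\omega_j)_{j\in J}$ in $\HeisenbergMod{\vartheta}{p,q,d}$ and co-anchors $(\eta_j)_{j\in J}$ in $\HeisenbergMod{\theta}{p,q,d}$, each of D-norm at most $1$. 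I must then control three quantities: the length of the underlying bridge $(\D,x,\pi_\vartheta,\pi_\theta)$ between the base quantum compact metric spaces in the sense of \cite[Definition 3.17]{Latremoliere13}, the modular reach $\bridgemodularreach{\gamma_\vartheta}$, and the imprint $\bridgeimprint{\gamma_\vartheta}$.

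For the underlying bridge I would take one realizing, up to $\varepsilon$, the convergence $\qt{\vartheta}\to\qt{\theta}$ for the quantum propinquity, which holds by \cite{Latremoliere15c}; this provides $\D$, $\pi_\vartheta$, $\pi_\theta$ and a bridge $(\D,x,\pi_\vartheta,\pi_\theta)$ of length $\leq\varepsilon$ once $|\vartheta-\theta|$ is small, arranged so that $\bigl\|\pi_\vartheta(U_\vartheta)x-x\pi_\theta(U_\theta)\bigr\|_\D$ and $\bigl\|\pi_\vartheta(V_\vartheta)x-x\pi_\theta(V_\theta)\bigr\|_\D$ are small, hence $\bigl\|\pi_\vartheta(U_\vartheta^nV_\vartheta^m)x-x\pi_\theta(U_\theta^nV_\theta^m)\bigr\|_\D$ is small for each fixed $(n,m)$ by telescoping. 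For the anchors and co-anchors I would fix a continuous rescaling $\rho$ near $\theta$ with $\rho(\theta)=1$, chosen so that Hermite functions rescaled by $\rho(\vartheta)$ are adapted to the Schr\"odinger representation $\sigma_{\hslash,d}$. Using the uniform (in $\vartheta$ near $\theta$) approximation of elements of $\{\eta\in\HeisenbergMod{\vartheta}{p,q,d}:\CDN_\vartheta^{p,q,d}(\eta)\leq 1\}$ by convolution operators $\sigma_{\hslash,d}^{f}$ applied to finite linear combinations of such rescaled Hermite functions --- the harmonic-analytic content of the paper --- together with Lemma \ref{contraction-lemma} to keep the resulting D-norms at most $1+\varepsilon$ (then renormalizing), I obtain a \emph{single} finite family $\xi_1,\dots,\xi_n\in\mathcal{S}(\C^d)$ such that, for every $\vartheta$ close enough to $\theta$, the vectors $\xi_{j,\rho(\vartheta)}$ form an $\varepsilon$-net of the D-ball of $\HeisenbergMod{\vartheta}{p,q,d}$ for the metric $\KantorovichMod{\CDN_\vartheta^{p,q,d}}$; here Theorem \ref{d-norm-cont-thm} guarantees both that a single family serves all nearby $\vartheta$ and that the bound $\leq 1$ survives in the limit $\vartheta\to\theta$. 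Setting $\omega_j=\xi_{j,\rho(\vartheta)}$ and $\eta_j=\xi_{j,\rho(\theta)}=\xi_j$ then forces $\bridgeimprint{\gamma_\vartheta}\leq\varepsilon$.

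It remains to bound the modular reach $\bridgemodularreach{\gamma_\vartheta}=\max_{j,k}\bigl\|\pi_\vartheta\bigl(\inner{\omega_j}{\omega_k}{\HeisenbergMod{\vartheta}{p,q,d}}\bigr)x-x\,\pi_\theta\bigl(\inner{\eta_j}{\eta_k}{\HeisenbergMod{\theta}{p,q,d}}\bigr)\bigr\|_\D$. In the twisted-convolution picture each $\inner{\xi_{j,\rho(\vartheta)}}{\xi_{k,\rho(\vartheta)}}{\HeisenbergMod{\vartheta}{p,q,d}}$ is a Fourier series $\sum_{n,m}c_{nm}(\vartheta)U_\vartheta^nV_\vartheta^m$ with $\vartheta\mapsto(c_{nm}(\vartheta))_{n,m}$ continuous into $\ell^1(\Z^2)$ --- this is exactly the continuity proved in Lemma \ref{omega-cont-lemma} and \cite[Lemma 3.2]{Latremoliere17a}. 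Splitting the relevant difference into $\sum_{n,m}c_{nm}(\vartheta)\bigl(\pi_\vartheta(U_\vartheta^nV_\vartheta^m)x-x\pi_\theta(U_\theta^nV_\theta^m)\bigr)$ and $\sum_{n,m}\bigl(c_{nm}(\vartheta)-c_{nm}(\theta)\bigr)x\,\pi_\theta(U_\theta^nV_\theta^m)$, truncating the $\ell^1$-tails uniformly in $\vartheta$, and applying the underlying-bridge estimates to the finitely many surviving terms, yields $\bridgemodularreach{\gamma_\vartheta}\to 0$ as $\vartheta\to\theta$. Combining the three bounds gives $\bridgelength{\gamma_\vartheta}\leq C\varepsilon$ for $\vartheta$ near $\theta$, and \eqref{propinquity-eq} completes the argument.

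The step I expect to be the main obstacle is the imprint estimate: it demands a single finite family whose rescalings $\varepsilon$-cover the D-balls of $\HeisenbergMod{\vartheta}{p,q,d}$ for \emph{all} $\vartheta$ in a neighbourhood of $\theta$ simultaneously --- a uniform-in-$\vartheta$ strengthening of the compactness of the D-ball that is built into Definition \ref{qvb-dev}. This forces one to combine the Hermite-function harmonic analysis, the near-isometry of the convolution operators from Lemma \ref{contraction-lemma}, and the continuous-field property of the D-norms from Theorem \ref{d-norm-cont-thm}, and to keep all the relevant estimates uniform across the parameter. By contrast, once that family is secured, the modular-reach bound follows mechanically from the $\ell^1$-continuity of the inner products, and the underlying-bridge bound is precisely the already-established quantum-propinquity convergence of the quantum tori.
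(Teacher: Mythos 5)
Your overall architecture coincides with the paper's: bound $\modpropinquity{}$ through Expression (\ref{propinquity-eq}) by building, for each $\vartheta$ near $\theta$, a modular bridge whose base data is taken from the explicit convergence of the quantum tori, and whose anchors and co-anchors come from one finite family serving every nearby parameter at once. Your treatment of the imprint is exactly the content of Lemmas (\ref{same-dim-approx-lemma}) and (\ref{anchors-lemma}): Hermite--Laguerre harmonic analysis to land in a fixed finite-dimensional range, Lemma (\ref{contraction-lemma}) to keep the D-norms at most $1+\varepsilon$ before renormalizing, and Theorem (\ref{d-norm-cont-thm}) to make a single finite family (rescaled by ratios of D-norms) $\varepsilon$-dense in the unit D-ball of every module with parameter close to $\theta$. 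You correctly single this out as the crux, and your sketch of it matches the paper's.

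Where you genuinely diverge is the modular reach, and there your argument has a gap as written. You expand the inner products in $\ell^1(\Z^2)$, truncate the tails uniformly in $\vartheta$ (fine: continuity into $\ell^1$ on a compact interval gives uniformly small tails), and then invoke generator-level estimates on $\left\|\pi_\vartheta(U_\vartheta)x-x\pi_\theta(U_\theta)\right\|_\D$ and $\left\|\pi_\vartheta(V_\vartheta)x-x\pi_\theta(V_\theta)\right\|_\D$, which you say can be ``arranged'' from the convergence of the tori. Smallness of a bridge's length does not give this: the reach only guarantees that each Lip-unit-ball element of one torus is matched through the pivot by \emph{some} element of the other, not by the element with the same Fourier coefficients, and in particular not by the corresponding generator. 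To obtain your monomial-by-monomial estimates you must open up the explicit construction of \cite{Latremoliere13c} (note the reference: the explicit-approach paper, not \cite{Latremoliere15c}). This is precisely why the paper's own proof does not use the bridge length as a black box either: it pushes the inner products into the range $V$ of the Fej\'er operator $\beta^{\mathsf{Fe}}$, subtracts the trace to land in $\ker\tau\cap V$, and applies \cite[Claim 5.15]{Latremoliere13c}, whose conclusion intertwines $\pi_\theta(a)T$ with $T\pi_\vartheta(s(a)a)$ only after rescaling by the Lip-ratio $s(a)=\Lip_\theta(a)/\Lip_\vartheta(a)$; a substantial part of the paper's computation (the functions $\mathsf{y}^\Re_{j,k}$, $\mathsf{y}^\Im_{j,k}$) is then spent reconciling that Lip-ratio rescaling with the D-norm-ratio rescaling carried by the co-anchors. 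Your route would avoid that reconciliation, since for monomials the rescaling is innocuous, so it is a legitimately more direct decomposition; but as proposed the key pivot estimate is asserted rather than derived. If you justify it from the explicit bridge --- for instance by quoting the Claim 5.15-type estimates for the finitely many monomials surviving your truncation --- the rest of your reach argument, and hence the theorem, goes through.
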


To prove Theorem (\ref{main-qt-thm}), we begin with a few lemmas. Our first step consists in finding an appropriate choice of anchors. We establish two lemmas to this end. The first lemma extends \cite[Lemma 6.9]{Latremoliere17a} by proving that while the range of the operators involved in \cite[Lemma 6.9]{Latremoliere17a} depends on the parameters used to define the Heisenberg modules, its dimension does not. The second lemma then uses the particular basis of Hermite functions obtained in the first lemma to construct our anchors.

\begin{lemma}\label{same-dim-approx-lemma}
For all $j\in\N$ and $\eth > 0$, let:
\begin{equation*}
\psi_\eth^j : r \in [0,\infty) \mapsto \eth \exp\left(-\frac{\pi \eth r^2}{2}\right) \mathrm{L}_j\left(\pi \eth r^2\right)
\end{equation*}
where $\mathrm{L}_j : t \in \R\mapsto \sum_{k = 0}^n \frac{(-1)^k}{k!} {j \choose j - k} t^k$ is the $j^{\mathrm{th}}$ Laguerre polynomial. 

Let $f\in\C_0(\R)$ be compactly supported. For all $j\in\N$ and $\eth\not= 0$, we set:
\begin{equation*}
\mathrm{C}_\eth^j(f) = \sum_{k=0}^j \frac{j + 1 - k}{j + 1} \inner{f \psi_\eth^k}{\psi_\eth^k}{L^2(\R, r\,dr)}\psi_\eth^k \text{.}
\end{equation*}

For all $\varepsilon > 0$ and $\eth_0 \not=0$, there exists $N\in \N$ and $\delta \in (0,|\eth_0|)$ such that, for all $\eth \in [\eth_0 - \delta, \eth_0 + \delta]$, we have:
\begin{equation*}
\left\|f - \sum_{j=0}^N \mathrm{C}_\eth^j(f) \right\|_{L^1(\R_+, r dr)} \leq \varepsilon \text{.}
\end{equation*}
\end{lemma}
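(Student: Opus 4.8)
The plan is to extract the uniformity in $\eth$ from the single-parameter approximation result \cite[Lemma 6.9]{Latremoliere17a} by combining it with a continuity argument in the parameter $\eth$, crucially exploiting that the requested neighborhood $[\eth_0-\delta,\eth_0+\delta]$ must avoid $0$. Since one insists $\delta<|\eth_0|$, every $\eth$ in that neighborhood keeps the sign of $\eth_0$ and satisfies $|\eth|\in[\eth_-,\eth_+]$ for some fixed $0<\eth_-<\eth_+$; after replacing $\eth$ by $|\eth|$ if necessary I assume $\eth_0>0$. First I would apply \cite[Lemma 6.9]{Latremoliere17a} at the fixed value $\eth_0$ to the fixed compactly supported $f\in C_0(\R)$: it furnishes some $N\in\N$ with $\norm{f-\sum_{j=0}^N\mathrm{C}_{\eth_0}^j(f)}{L^1(\R_+,r\,dr)}<\varepsilon/2$. (Should one want a self-contained derivation, this is a Ces\`aro summability statement for expansions in the Laguerre functions $\psi_{\eth_0}^k$ applied to the continuous compactly supported $f$, once one recalls that the change of variables $u=\pi\eth_0 r^2$ turns $\{\psi_{\eth_0}^k:k\in\N\}$, up to the normalization $\norm{\psi_{\eth_0}^k}{L^2(\R_+,r\,dr)}^2=\eth_0/(2\pi)$, into the standard orthonormal Laguerre basis.) This $N$, whose only structurally relevant feature is that the operator $f\mapsto\sum_{j=0}^N\mathrm{C}_\eth^j(f)$ has range inside the span of $\psi_\eth^0,\dots,\psi_\eth^N$ --- hence of dimension at most $N+1$ regardless of $\eth$ --- is the $N$ of the statement.

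\textbf{The continuity step.} Next I would prove that $\eth\mapsto\sum_{j=0}^N\mathrm{C}_\eth^j(f)$ is continuous from $[\eth_-,\eth_+]$ into $L^1(\R_+,r\,dr)$ equipped with its norm. Reordering the finite double sum yields
\begin{equation*}
\sum_{j=0}^N\mathrm{C}_\eth^j(f)=\sum_{k=0}^N\beta_k^N\,\inner{f\psi_\eth^k}{\psi_\eth^k}{L^2(\R,r\,dr)}\,\psi_\eth^k\text{,}\qquad\beta_k^N=\sum_{j=k}^N\frac{j+1-k}{j+1}\text{,}
\end{equation*}
so it suffices to check, for each $k\leq N$, that $\eth\mapsto\psi_\eth^k$ is continuous into $L^1(\R_+,r\,dr)$ and that $\eth\mapsto\inner{f\psi_\eth^k}{\psi_\eth^k}{L^2(\R,r\,dr)}$ is continuous. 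Both follow from dominated convergence: $\psi_\eth^k(r)=\eth\exp(-\pi\eth r^2/2)\mathrm{L}_k(\pi\eth r^2)$ is jointly continuous in $(\eth,r)$, and on $\eth\in[\eth_-,\eth_+]$ there is a polynomial $P_k$ with $|\psi_\eth^k(r)|\leq P_k(r)\exp(-\pi\eth_- r^2/2)$ for all such $\eth$ and all $r\geq 0$, a function that is integrable against $r\,dr$ on $\R_+$ and dominates $\psi_\eth^k$ and --- $f$ being bounded with compact support --- also dominates in absolute value the integrand $r\mapsto f(r)\psi_\eth^k(r)^2$ of the scalar above, all uniformly in $\eth$.

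\textbf{Putting it together, and where the work is.} Using the previous step I would then pick $\delta\in(0,|\eth_0|)$ small enough that $\norm{\sum_{j=0}^N\mathrm{C}_\eth^j(f)-\sum_{j=0}^N\mathrm{C}_{\eth_0}^j(f)}{L^1(\R_+,r\,dr)}<\varepsilon/2$ for every $\eth\in[\eth_0-\delta,\eth_0+\delta]$; the triangle inequality then gives $\norm{f-\sum_{j=0}^N\mathrm{C}_\eth^j(f)}{L^1(\R_+,r\,dr)}<\varepsilon$ for all such $\eth$, with $N$ independent of $\eth$, as required. The main obstacle is not conceptual but is the bookkeeping hidden in the continuity step: one must make the dominated-convergence estimates uniform over the whole $\eth$-neighborhood, and it is precisely for this --- to keep the Gaussian tails $\exp(-\pi\eth r^2/2)$ uniformly integrable against $r\,dr$ and to avoid the sign change of $\eth$ --- that the neighborhood has to be kept away from $0$, which is exactly what forces the requirement $\delta<|\eth_0|$.
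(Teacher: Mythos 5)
Your proposal is correct and follows essentially the same route as the paper: fix $N$ by applying the Ces\`aro--Laguerre approximation (via \cite[Lemma 6.9]{Latremoliere17a}, i.e.\ Thangavelu) at $\eth_0$, then use dominated convergence --- with a Gaussian-type dominating function uniform over a compact $\eth$-interval bounded away from $0$ --- to get continuity of $\eth\mapsto\sum_{j=0}^N\mathrm{C}_\eth^j(f)$ in $L^1(\R_+,r\,dr)$, and conclude by the triangle inequality. The only differences (your explicit reordering of the double sum and your choice of polynomial-times-Gaussian dominant instead of the paper's piecewise bound) are cosmetic.
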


\begin{proof}

  Fix $\eth_0 > 0$. By \cite[Theorem 6.2.1]{Thangavelu93}, as in the proof of \cite[Lemma 6.9]{Latremoliere17a}, there exists $N > 0$ such that:
\begin{equation*}
\left\| f - \sum_{j=0}^N \mathrm{C}_{\eth_0}^j(f) \right\|_{L^1(\R_+, r dr)} \leq \frac{\varepsilon}{2} \text{.}
\end{equation*}

  Now, note that $\psi_\eth^j (t) = \eth\psi_1^j(\sqrt{\eth}t)$ for all $t \geq 0$, all $\eth>0$ and all $j\in\N$, with $\psi_1^j$ the $j^{\th}$-Laguerre function.

  Thus, for any $\eth > 0$ and for all $x\in [0,\infty)$, we have $\lim_{\eth\rightarrow\eth_0}\psi_{\eth}^j(x) = \psi_{\eth_0}^j(x)$.

  Moreover, since $\lim_{r\rightarrow\infty}\exp\left(\frac{r^2}{4}\right)\psi_1^j(r) = 0$ for all $j\in\N$, we conclude that there exists $K>0$ such that if $\eth \in \left[\frac{\eth_0}{2},\frac{3\eth_0}{2}\right]$ and $j\in \{0,\ldots,N\}$, then $x\geq K \implies \psi_\eth^j(x)\leq \exp\left(-\frac{\eth_0^2 r^2}{8}\right)$. 

  Since $(\eth,x)\in\left[\frac{\eth_0}{2},\frac{3\eth_0}{2}\right]\times[0,K]\mapsto \psi_{\eth}^j(x)$ is continuous on a compact, it is bounded; we thus can find $M > 0$ such that for all $j\in\{0,\ldots,N\}$, $\eth \in \left[\frac{\eth_0}{2},\frac{3\eth_0}{2}\right]$ and $x\in [0,K]$ we have $|\psi_\eth^j(x)|\leq M$.

  Therefore, if we set:
  \begin{equation*}
    g : x \in [0,\infty) \mapsto\begin{cases} M \text{ if $x\leq K$,} \\ \exp\left(-\frac{\eth_0^2 r^2}{8}\right) \text{ otherwise}\end{cases}
  \end{equation*}
  then for all $j\in\{0,\ldots,N\}$, $\eth \in \left[\frac{\eth_0}{2},\frac{3\eth_0}{2}\right]$ and $x\in [0,\infty)$, we have $|\psi_\eth^j(x)| \leq g(x)$. Of course, $g \in L^1(\R_+,r\,dr)$ and $g^2 \in L^1(\R_+,r\, dr)$.

  By Lebesgue Dominated Convergence Theorem, we thus conclude (as $f$ is bounded) that:
  \begin{equation*}
    \lim_{\eth\rightarrow\eth_0} \inner{f \psi_\eth^j}{\psi_\eth^j}{L^2(\R_+,r\,dr)} = \inner{f\psi_{\eth_0}}{\psi_{\eth_0}}{L^2(\R_+,r\,dr)}
  \end{equation*}
  and, by the same theorem again and by the triangle inequality:
  \begin{equation*}
    \lim_{\eth\rightarrow\eth_0} \norm{\mathrm{C}_\eth^j(f)-\mathrm{C}_{\eth_0}^j(f)}{L^1(\R_+,r\,dr)} = 0 \text{.}
  \end{equation*}

  Therefore, there exists $\delta > 0$ such that if $\eth \in [\eth_0-\delta,\eth_0+\delta]$, then:
  \begin{align*}
    \norm{f - \sum_{j=0}^N \mathrm{C}_{\eth}^j(f)}{L^1(\R_+,r\,dr)}
    &\leq
      \norm{f - \sum_{j=0}^N \mathrm{C}_{\eth_0}(f)}{L^1(\R_+,r\,dr)} \\
    &\quad + \sum_{j=0}^N\norm{\mathrm{C}_\eth^j(f)-\mathrm{C}_{\eth_0}^j(f)}{L^1(\R_+,r\,dr)} \leq \varepsilon
  \end{align*}
  as desired.
\end{proof}
  
We are now in a position to prove the existence of a good choice of anchors, which will be chosen as fixed elements in the space $\mathcal{S}(\C^d)$ of Schwartz functions which will give good approximations in norms in a whole family of Heisenberg modules.

\begin{lemma}\label{anchors-lemma}
Let $p \in \Z$, $q\in\N\setminus\{0\}$ and $d\in q\N\setminus\{0\}$. Let $\eth_0 \not= 0$. For all $\varepsilon > 0$, there exist $\delta \in \left(0,\frac{|\eth_0|}{2}\right)$ and a finite subset $\alg{F}$ of the closed unit ball of $\CDN_{\eth_0 + \frac{p}{q}}^{p,q,d}\setminus\{0\}$ for $\|\cdot\|_{\HeisenbergMod{\frac{p}{q} + \eth_0}{p,q,d}}$ such that, for all $\eth \in [\eth_0 - \delta, \eth_0 + \delta]$, the set:
\begin{equation*}
\left\{ \frac{\CDN_{\frac{p}{q} + \eth_0}^{p,q,d}(\omega)}{\CDN_{\frac{p}{q} + \eth}^{p,q,d}(\omega)}\omega : \omega \in \alg{F}  \right\}
\end{equation*}
is a $\varepsilon$-dense subset of the closed unit ball of $\|\cdot\|_{\HeisenbergMod{\frac{p}{q} + \eth}{p,q,d}}$.
\end{lemma}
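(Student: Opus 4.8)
Fix $\eth_0 \neq 0$; it suffices to treat $\eth_0 > 0$, the case $\eth_0 < 0$ being symmetric (as in the proof of Lemma \ref{same-dim-approx-lemma}). The plan is to produce the anchors in four steps. The first, and by far the hardest, is to exhibit a single $N \in \N$ and some $\delta_1 \in \left(0, \frac{\eth_0}{2}\right)$ such that, for every $\eth \in [\eth_0 - \delta_1, \eth_0 + \delta_1]$, every element of the $\CDN_{\frac{p}{q}+\eth}^{p,q,d}$-unit ball of $\HeisenbergMod{\frac{p}{q}+\eth}{p,q,d}$ lies within $\frac{\varepsilon}{4}$, for the modular Monge--Kantorovich metric $\KantorovichMod{\CDN_{\frac{p}{q}+\eth}^{p,q,d}}$, of the subspace $V_\eth \subseteq \mathcal{S}(\C^d)$ spanned by the rescaled Hermite functions $\psi_\eth^0,\dots,\psi_\eth^N$ of Lemma \ref{same-dim-approx-lemma} (tensored with $\C^d$), whose dimension therefore depends only on $N$ and $d$ and \emph{not} on $\eth$. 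This is where Lemma \ref{same-dim-approx-lemma}, together with the Laguerre description of the matrix coefficients of $\sigma_{\eth,d}$ on Hermite functions underlying \cite[Lemma 6.9]{Latremoliere17a}, is used.

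Lemma \ref{same-dim-approx-lemma} is stated as an $L^1(\R_+, r\,dr)$-approximation of the Laguerre data rather than as a $\KantorovichMod{}$-approximation of module elements, and it gives no \emph{a priori} $D$-norm control on the truncated expansion $\mathrm{C}_\eth = \sum_{j=0}^N \mathrm{C}_\eth^j$. To bridge both gaps I would argue exactly as in the passage from Lemma \ref{omega-cont-lemma} to Theorem \ref{d-norm-cont-thm}: feed the uniform-in-$\eth$ domination estimates from the proof of Lemma \ref{same-dim-approx-lemma} into \cite[Lemma 3.2 and Proposition 3.5]{Latremoliere17a} to obtain the $\KantorovichMod{}$-approximation, and precompose, when needed, with a convolution operator $\sigma_{\eth,d}^{f}$ as in Lemma \ref{contraction-lemma} (with $f$ supported near $0$ and $\iint f \leq 1$), which is a $(1+\varepsilon)$-near-isometry for $\CDN_{\frac{p}{q}+\eth}^{p,q,d}$, so that after a harmless rescaling by a factor $\leq 1$ the approximant lands in the $D$-unit ball of $\HeisenbergMod{\frac{p}{q}+\eth}{p,q,d}$. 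I expect this to be the core difficulty: combining Thangavelu-type uniform Cesàro summability, the transfer of scalar estimates to Hilbert-module estimates, and the control of the difference-quotient $D$-norm under truncation.

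The remaining three steps are soft. Since $\psi_\eth^j = \eth\,\psi_1^j(\sqrt{\eth}\,\cdot)$ and these functions are dominated on $[\eth_0 - \delta_1, \eth_0 + \delta_1]$ by one fixed function in $L^1(r\,dr) \cap L^2(r\,dr)$ (again from the proof of Lemma \ref{same-dim-approx-lemma}), Dominated Convergence and \cite[Proposition 3.5]{Latremoliere17a} make $\eth \mapsto V_\eth$ vary continuously, so after shrinking $\delta_1$ to some $\delta_2$, every $D$-norm-$\leq 1$ element of $V_\eth$ is within $\frac{\varepsilon}{4}$ (for $\KantorovichMod{}$) of a $D$-norm-$\leq 1$ element of $V_{\eth_0}$. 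On the finite-dimensional space $V_{\eth_0}$ the $D$-norm $\CDN_{\frac{p}{q}+\eth_0}^{p,q,d}$ and $\|\cdot\|_{\HeisenbergMod{\frac{p}{q}+\eth_0}{p,q,d}}$ are equivalent, so $\{v \in V_{\eth_0} : \CDN_{\frac{p}{q}+\eth_0}^{p,q,d}(v) \leq 1\}$ is compact; let $\alg{F}$ be a finite $\frac{\varepsilon}{4}$-net of it with $0 \notin \alg{F}$ (drop $0$ if it occurs, or replace it by a vector of negligible norm). Then $\alg{F}$ is a finite subset of $\mathcal{S}(\C^d) \setminus \{0\}$ inside the $\CDN_{\frac{p}{q}+\eth_0}^{p,q,d}$-unit ball of $\HeisenbergMod{\frac{p}{q}+\eth_0}{p,q,d}$. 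Finally, each $\omega \in \alg{F}$ is nonzero in every $\HeisenbergMod{\frac{p}{q}+\eth}{p,q,d}$, so Theorem \ref{d-norm-cont-thm} (with the constant rescaling $\rho \equiv 1$) and \cite[Proposition 3.5]{Latremoliere17a} make $\eth \mapsto \CDN_{\frac{p}{q}+\eth}^{p,q,d}(\omega)$ and $\eth \mapsto \|\omega\|_{\HeisenbergMod{\frac{p}{q}+\eth}{p,q,d}}$ continuous and strictly positive; hence there is $\delta \in (0, \delta_2)$ such that for all $\eth \in [\eth_0 - \delta, \eth_0 + \delta]$ and all $\omega \in \alg{F}$ the scalar $c_\eth(\omega) = \CDN_{\frac{p}{q}+\eth_0}^{p,q,d}(\omega) / \CDN_{\frac{p}{q}+\eth}^{p,q,d}(\omega)$ is so near $1$ that $\|c_\eth(\omega)\omega - \omega\|_{\HeisenbergMod{\frac{p}{q}+\eth}{p,q,d}} < \frac{\varepsilon}{4}$.

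With this $\delta$ and $\alg{F}$: for $\eth \in [\eth_0 - \delta, \eth_0 + \delta]$ one has $\CDN_{\frac{p}{q}+\eth}^{p,q,d}(c_\eth(\omega)\omega) = \CDN_{\frac{p}{q}+\eth_0}^{p,q,d}(\omega) \leq 1$, so $\{c_\eth(\omega)\omega : \omega \in \alg{F}\}$ lies in the $\CDN_{\frac{p}{q}+\eth}^{p,q,d}$-unit ball --- a fortiori in the $\|\cdot\|_{\HeisenbergMod{\frac{p}{q}+\eth}{p,q,d}}$-unit ball --- of $\HeisenbergMod{\frac{p}{q}+\eth}{p,q,d}$; and chaining the four $\frac{\varepsilon}{4}$-estimates above shows this set is $\varepsilon$-dense there for $\KantorovichMod{\CDN_{\frac{p}{q}+\eth}^{p,q,d}}$, as required.
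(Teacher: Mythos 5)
Your outline follows the paper's own route: approximate, uniformly for $\eth$ near $\eth_0$, every element of the $\CDN_{\frac{p}{q}+\eth}^{p,q,d}$-unit ball by the truncated convolution $\sigma_{\eth,d}^{h_\eth}$ (Lemma \ref{same-dim-approx-lemma} together with \cite[Lemmas 6.5, 6.6, 6.8]{Latremoliere17a} and the near-isometry rescaling of Lemma \ref{contraction-lemma}), exploit that its range has dimension independent of $\eth$, net at $\eth_0$, and correct the D-norms by the factor $\CDN_{\frac{p}{q}+\eth_0}^{p,q,d}(\omega)/\CDN_{\frac{p}{q}+\eth}^{p,q,d}(\omega)$ using Theorem \ref{d-norm-cont-thm}. (A small slip: the range is spanned by the Hermite functions $\mathcal{H}_\eth^j$, not by the Laguerre profiles $\psi_\eth^j$ of Lemma \ref{same-dim-approx-lemma}, which are functions on $[0,\infty)$; your parenthetical about the Laguerre description of the matrix coefficients shows you mean the former.)

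The one place where your argument, as written, does not go through is the step you call soft: passing from $V_\eth$ to $V_{\eth_0}$. Dominated convergence and \cite[Proposition 3.5]{Latremoliere17a} give continuity in $\eth$ of $\eta_\eth(c)$ for each \emph{fixed} coefficient vector $c$; they do not, by themselves, show that \emph{every} element of $V_\eth$ with $\CDN_{\frac{p}{q}+\eth}^{p,q,d}\leq 1$ is $\frac{\varepsilon}{4}$-close to a D-norm-bounded element of $V_{\eth_0}$, uniformly in $\eth$. For that you must first bound the coefficients of such an element independently of $\eth$, i.e.\ you need a two-sided comparison, uniform in $\eth$, between the norms $c\mapsto\CDN_{\frac{p}{q}+\eth}^{p,q,d}(\eta_\eth(c))$ on the coefficient space. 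This is exactly where the paper works: it sets $\|c\|_V=\sup_\eth\CDN_{\frac{p}{q}+\eth}^{p,q,d}(\eta_\eth(c))$, uses the scaling identity (\ref{rescale-eq1}) and Theorem \ref{d-norm-cont-thm} to show $(\eth,c)\mapsto\CDN_{\frac{p}{q}+\eth}^{p,q,d}(\eta_\eth(c))$ is continuous, and extracts by compactness a constant $k>0$ with $k\|c\|_V\leq\CDN_{\frac{p}{q}+\eth}^{p,q,d}(\eta_\eth(c))\leq\|c\|_V$ for all $\eth$ in the interval, followed by uniform continuity on a compact coefficient ball. The same uniformity is needed when you choose $\alg{F}$: your $\frac{\varepsilon}{4}$-net of the $\CDN_{\frac{p}{q}+\eth_0}^{p,q,d}$-unit ball of $V_{\eth_0}$ must be fine for a norm that dominates $\|\cdot\|_{\HeisenbergMod{\frac{p}{q}+\eth}{p,q,d}}$ for all $\eth$ in the interval simultaneously (the paper nets in $\|\cdot\|_V$), not merely for the norm at $\eth_0$. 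These are finite-dimensional compactness arguments, so the gap is fillable along precisely these lines, but it is a genuine missing ingredient in your write-up rather than a routine consequence of what you cite. Finally, note that your chain only yields $\varepsilon$-density for $\KantorovichMod{\CDN_{\frac{p}{q}+\eth}^{p,q,d}}$, whereas the paper obtains the stronger density in $\|\cdot\|_{\HeisenbergMod{\frac{p}{q}+\eth}{p,q,d}}$; the weaker form does suffice for the imprint estimate in the proof of Theorem \ref{main-qt-thm}.
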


\begin{proof}
We now note that thanks to a change of variable in the definition of the operator $\sigma_{\eth,d}^g$, it is sufficient to prove our result for $\eth > 0$. We shall henceforth assume $\eth > 0$.

We proceed in two steps. First, we reduce the problem to a finite dimensional question. We then show the desired continuity result.

Let $\varepsilon > 0$ be given. By \cite[Lemma 6.8,Lemma 3.8]{Latremoliere17a} and by Lemma (\ref{contraction-lemma}), there exists a function $f : \R_+ \rightarrow \R_+$ such that, if $g : (x,y) \in \R^2 \mapsto f\left(\sqrt{x^2 + y^2}\right)$ then:
\begin{enumerate}
\item $\int_{\R_+} f(r) \, r dr = \frac{1}{2\pi}$,
\item $\iint_{\R^2} g(x,y)\|(x,y)\| \, dx dy \leq \frac{\varepsilon}{48\pi\eth_0} = \frac{\frac{\varepsilon}{16}}{2\pi\frac{3\eth_0}{2}}$,
\item $\CDN_{\vartheta}^{p,q,d}(\sigma_{\eth,d}^g \xi) \leq (1 + \frac{\varepsilon}{16})\CDN_{\vartheta}^{p,q,d}(\xi)$ for all $\vartheta\not=\frac{p}{q}$ and all $\xi \in \mathcal{S}(\C^d)$.
\end{enumerate}
By \cite[Lemma 6.6]{Latremoliere17a}, we have that for all $\omega\in\mathcal{S}(\C^d)$, and for all $\eth \in \left[\frac{\eth_0}{2}, \frac{3\eth_0}{2}\right]$:
\begin{equation*}
\left\|\omega-\sigma_{\eth,d}^g\omega\right\|_{\HeisenbergMod{\frac{p}{q} + \eth}{p,q,d}} \leq \frac{\varepsilon}{16} \CDN_{\frac{p}{q} + \eth}^{p,q,d}(\omega)\text{,}
\end{equation*}
We apply Lemma (\ref{same-dim-approx-lemma}) to obtain some $N \in \N$ and $\delta_0 \in \left(0,\frac{|\eth_0|}{2}\right)$ such that if $\eth\in[\eth_0-\delta_0,\eth_0+\delta_0]$ then:
\begin{equation*}
\left\|f - \sum_{j=0}^N \mathrm{C}_\eth^j(f) \right\|_{L^1(\R_+, r dr)} \leq \frac{\varepsilon}{8} \text{.}
\end{equation*}

Let $h_\eth : (x,y) \in \R^2 \mapsto \sum_{j=0}^N \mathrm{C}_\eth^j(f) (\sqrt{x^2 + y^2})$ for all $\eth \not= 0$.

For each $\eth \in [\eth_0 - \delta_0,\eth_0 + \delta_0]$, we then have by \cite[Lemma 6.5]{Latremoliere17a}:
\begin{equation*}
\opnorm{ \sigma^g_{\eth,d} - \sigma^{h_\eth}_{\eth,d} }{}{\HeisenbergMod{\theta}{p,q,d}} \leq \|f - \sum_{j=0}^N \mathrm{C}_\eth^j(f)\|_{L^1(\R_+, r dr)} \leq \frac{\varepsilon}{8} \text{.}
\end{equation*}

Consequently, for all $\eth \in [\eth_0 - \delta_0,\eth_0 + \delta_0]$ and for all $\omega \in \HeisenbergMod{\eth+\frac{p}{q}}{p,q,d}$ such that $\CDN_{\eth+\frac{p}{q}}^{p,q,d}(\omega)\leq 1$, we have:
\begin{equation*}
\left\|\omega - \sigma^{h_\eth}_{\eth,d} \omega \right\|_{\HeisenbergMod{\eth+\frac{p}{q}}{p,q,d}} \leq \frac{3\varepsilon}{16} \text{,}
\end{equation*}
therefore $\CDN_{\frac{p}{q} + \eth}^{p,q,d}\left(\frac{1}{1 + \frac{\varepsilon}{16}} \sigma^{h_\eth}_{\eth,d} \omega \right) \leq 1$ and:
\begin{equation*}
\left\|\omega - \frac{1}{1 + \frac{\varepsilon}{16}} \sigma^{h_\eth}_{\eth,d} \omega \right\|_{\HeisenbergMod{\eth + \frac{p}{q}}{p,q,d}} \leq \frac{\frac{\varepsilon}{16}}{1 + \frac{\varepsilon}{16}}\|\omega\|_{\HeisenbergMod{}{p,q,d}} + \frac{3\varepsilon}{16} \leq \frac{\varepsilon}{4}\text{.}
\end{equation*}

By the proof of \cite[Lemma 6.9]{Latremoliere17a}, the range of $\sigma^{h_\eth}_{\eth,d}$ is of dimension $N+1$, spanned by:
\begin{equation*}
\left\{ \mathcal{H}_\eth^j = \eth^{\frac{1}{4}} \mathcal{H}_1^j\left(\sqrt{\eth}\cdot \right) : j \in \{0,\ldots,N\} \right\}
\end{equation*}
where:
\begin{equation*}
\mathcal{H}_1^j : t \in \R \mapsto \frac{\left(2\right)^{\frac{1}{4}}}{\sqrt{j! 2^j}} \exp\left(-\frac{ t^2 \sqrt{2 \pi} }{2}\right)\mathrm{H}_j\left(t \sqrt{2\pi}\right)
\end{equation*}
and $\operatorname{H}_j$ is the $j^{\mathrm{th}}$ Hermite polynomial.

Let $V = \C^N$. For any $\eth > 0$ we define $\eta_\eth : (c_j)_{j\in\{1,\ldots,N\}} \in V \mapsto \sum_{j=1}^N c_j \mathcal{H}_\eth^j$. The map $\eta_\eth$ is a linear injection from $V$ to $\mathcal{S}\otimes\C^d$. For each $\eth > 0$ and $c \in V$, we set $\|c\|_{\eth} = \CDN_{\frac{p}{q} + \eth}^{p,q,d}(\eta_\eth(c))$; of course $\|\cdot\|_\eth$ is a norm on $V$.

We now set $\|c\|_V = \sup_{\eth \in [\eth_0 - \delta_0,\eth_0 + \delta_0]} \|c\|_\eth$. By construction, it is sufficient to check that $\|\cdot\|_V$ is valued in $\R_+$ (i.e. is never infinite) to conclude that $\|\cdot\|_V$ is a norm on $V$.

Let $c = (c_j)_{j\in\{0,\ldots,N\}} \in V$. Note that for all $t\in \R$ and $\eth > 0$:
\begin{equation}\label{rescale-eq1}
\eta_\eth(c) (t) = \sum_{j=0}^N c_j \mathcal{H}_\eth^j(t) = \left(\eth\right)^{\frac{1}{4}} \sum_{j=0}^N \mathcal{H}_1^j(\sqrt{\eth}t) = \left(\eth\right)^{\frac{1}{4}} \eta_1(c) (\sqrt{\eth} t)\text{,}
\end{equation}
and of course, $\eta_1 \in \mathcal{S}(\C^d)$. Thus by Theorem (\ref{d-norm-cont-thm}), we conclude that
\begin{equation*}
  \eth \in [\eth_0 - \delta_0,\eth_0 + \delta_0]\mapsto \CDN_{\frac{p}{q} + \eth}^{p,q,d}(\eta_\eth(c)) = \sqrt[4]{\eth} \CDN_{\frac{p}{q}+\eth}^{p,q,d}(\sqrt{\eth} \eta_1(c))
\end{equation*}
is continuous.

Therefore, it reaches its maximum on the compact $[\eth_0 - \delta_0, \eth_0 + \delta_0]$, which is by definition the number $\|c\|_V$. Thus $\|\cdot\|_V$ is a norm on $V$.

We now make another observation. We have, for all $\eth \in [\eth_0 - \delta_0,\eth_0 + \delta_0]$, and for all $c,d \in V$:
\begin{equation*}
\begin{split}
\left| \|c\|_\eth - \|d\|_{\eth_0} \right| &\leq \left| \|c\|_\eth - \|c\|_{\eth_0}\right| + \left| \|c\|_{\eth_0} - \|d\|_{\eth_0} \right| \\
&\leq \left| \|c\|_\eth - \|c\|_{\eth_0}\right| + \left| \|c - d \right\|_{\eth_0} \\
&\leq \left| \|c\|_\eth - \|c\|_{\eth_0}\right| + \left| \|c - d \right\|_{V} \text{.} 
\end{split}
\end{equation*}

Thus the function:
\begin{equation*}
\mathsf{n} : (\eth,c)\in[\eth_0 - \delta_0,\eth_0 + \delta_0] \times V \mapsto\|c\|_\eth
\end{equation*}
is continuous. It is in particular continuous on the compact $[\eth_0 - \delta_0,\eth_0 + \delta_0]\times B$ where $B$ is the closed unit ball for $\|\cdot\|_V$. 

Therefore there exists $k > 0$ such that for all $c \in V$, $\eth \in [\eth_0 - \delta_0, \eth_0 + \delta_0]$, we have:
\begin{equation*}
k \|c\|_V \leq \|c\|_\eth \leq \|c\|_V \text{,}
\end{equation*}
where we emphasize that $k$ does not depend on $\eth$.

Let now $E = \{ c \in V : \|c\|_V \leq \frac{1}{k} \}$. Since $V$ is finite dimensional, $E$ is compact. Therefore, the function $\mathsf{n}$ is uniformly continuous on the compact $[\eth_0 - \delta_0,\eth_0 + \delta_0]\times E$. Let $\delta_1 \in (0,\delta_0)$ be chosen so that, for all $\eth \in [\eth_0 - \delta_1, \eth_0 + \delta_1]$, and for all $c,d \in E$ with $\|c - d\|_V \leq \delta_1$, we have:
\begin{equation*}
\left| \mathsf{n}(\eth,c) - \mathsf{n}(\eth_0,d) \right| \leq \frac{k \varepsilon}{8} \text{.}
\end{equation*}
In particular, for all $\eth \in [\eth_0 - \delta_1,\eth_0 + \delta_1]$ and all $c\in E$, we have:
\begin{equation*}
\left| \|c\|_\eth - \|c\|_{\eth_0} \right| \leq \frac{k \varepsilon}{8} \text{.}
\end{equation*}

Let:
\begin{equation*}
  \alg{E} = \frac{1}{1 + \frac{\varepsilon}{16}}\sigma_{\eth_0, d}^{h_{\eth_0}}\left(\left\{ \omega \in \HeisenbergMod{\eth+\frac{p}{q}}{p,q,d} : \CDN_{\frac{p}{q} + \eth_0}^{p,q,d}(\omega)\leq 1 \right\} \right)\text{.}
\end{equation*}
By definition, $\alg{E}$ is a bounded subset of $V$ which is finite dimensional. Thus $\alg{E}$ is totally bounded for $\|\cdot\|_V$. Let $\alg{F}$ be a finite $\frac{\varepsilon}{8}$-dense subset of $\alg{E}$ for $\|\cdot\|_V$. We assume $0 \not\in \alg{F}$ (we can simply pick a $\frac{\varepsilon}{16}$-dense subset of $\alg{E}$ and then remove $0$ from it if needed).

For all $c \in \alg{F}$, the function:
\begin{equation*}
\mathsf{l}_c : \eta \in [\eth_0 - \delta_1,\eth_0 + \delta_1] \mapsto \frac{\CDN_{\frac{p}{q} + \eth_0}^{p,q,d}(\eta_{\eth_0}(c)) - \CDN_{\frac{p}{q} + \eth}^{p,q,d}(\eta_{\eth_0}(c))}{\CDN_{\frac{p}{q} + \eth}^{p,q,d}(\eta_{\eth_0}(c))}
\end{equation*}
is continuous on a compact, and it is null at $\eth_0$; hence there exists $\delta_2 \in (0,\delta_1)$ such that for all $\eth \in [\eth_0 - \delta_2, \eth_0 + \delta_2]$ and $c\in\alg{F}$, we have:
\begin{equation*}
\mathsf{l}_c(\eth) \leq \frac{k \varepsilon}{4} \text{.}
\end{equation*}
We emphasize that in the definition of $\mathsf{l}_c$, for any $c\in \alg{F}$, only involves the element $\eta_{\eth_0}(c)$, and the only dependence on the variable is through the choice of D-norm.

Last, for all $d\in \alg{F}$, the function $\eth \in [\eth_0 - \delta_2, \eth_0 + \delta_2] \mapsto \|\eta_\eth(d) - \eta_{\eth_0}(d)\|_{\HeisenbergMod{\frac{p}{q} + \eth}{p,q,d}}$ is continuous by \cite[Proposition 3.5]{Latremoliere17a} and Expression (\ref{rescale-eq1}). Therefore, since $\alg{F}$ is finite, there exists $\delta_3 \in (0,\delta_2)$ such that for all $\eth \in [\eth_0 - \delta_3, \eth_0 + \delta_3]$ and for all $d\in \alg{F}$:
\begin{equation*}
\left\| \eta_{\eth}(d) - \eta_{\eth_0}(d) \right\|_{\HeisenbergMod{\frac{p}{q} + \eth}{p,q,d}} \leq \frac{\varepsilon}{4} \text{.}
\end{equation*}

Fix now $\eth \in [\eth_0 - \delta_3, \eth_0 + \delta_3]$. Let now $\omega \in \HeisenbergMod{\eth+\frac{p}{q}}{p,q,d}$ with $\CDN_{\frac{p}{q} + \eth}^{p,q,d}(\omega)\leq 1$. Let $c \in V$ so that $\frac{1}{1 + \frac{\varepsilon}{16}}\sigma_{\eth,d}^{h_\eth}(\omega) = \eta_{\eth}(c)$ and note that by construction, $\|\eta - \eta_\eth(c) \|_{\HeisenbergMod{\eth + \frac{p}{q}}{p,q,d}} \leq \frac{\varepsilon}{4}$ while $\CDN_{\frac{p}{q} + \eth}^{p,q,d}(\eta_\eth(c)) \leq 1$.

Since $\CDN_{\HeisenbergMod{\frac{p}{q} + \eth}{p,q,d}}^{p,q,d}(\eta_\eth(c)) \leq 1$, we conclude $\|c\|_V \leq \frac{1}{k}$. Thus, $\|\eta_{\eth_0}(c)\|_V \leq 1 + \frac{k \varepsilon}{8}$.

Thus, $\frac{1}{1+\frac{k}{8}\varepsilon}\eta_{\eth_0}(c) \in \alg{E}$, and thus there exists $d \in \alg{F}$ such that $\|\frac{1}{1+\frac{k}{8}\varepsilon}c - d\|_V \leq \frac{\varepsilon}{8}$. Thus:
\begin{equation*}
\|c - d\|_V \leq \frac{\frac{k}{8} \varepsilon}{1 + \frac{k}{8} \varepsilon}\|c\|_V + \frac{\varepsilon}{8} \leq \frac{\varepsilon}{4} \text{.}
\end{equation*}

We conclude by observing that:
\begin{multline*}
\left\| \frac{\CDN_{\frac{p}{q} + \eth_0}^{p,q,d}(\eta_{\eth_0}(d))}{\CDN_{\frac{p}{q}+\eth}^{p,q,d}(\eta_{\eth_0}(d))} \eta_{\eth_0}(d) - \omega \right\|_{\HeisenbergMod{\frac{p}{q} + \eth}{p,q,d}} \\
\begin{split}
&\leq \left\| \frac{\CDN_{\frac{p}{q} + \eth_0}^{p,q,d}(\eta_{\eth_0}(d))}{\CDN_{\frac{p}{q}+\eth}^{p,q,d}(\eta_{\eth_0}(d))} \eta_{\eth_0}(d) - \eta_{\eth}(c) \right\|_{\HeisenbergMod{\frac{p}{q} + \eth}{p,q,d}} + \frac{\varepsilon}{4} \\
&\leq \mathsf{l}_d(\eth) \|d\|_V + \|\eta_{\eth_0}(d) - \eta_{\eth}(c)\|_{\HeisenbergMod{\frac{p}{q} + \eth}{p,q,d}} + \frac{\varepsilon}{4} \\
&\leq \frac{\varepsilon}{4} + \|\eta_{\eth}(d) - \eta_{\eth_0}(d)\|_{\HeisenbergMod{\frac{p}{q} + \eth}{p,q,d}} + \|\eta_{\eth}(d) - \eta_{\eth}(c)\|_{\HeisenbergMod{\frac{p}{q} + \eth}{p,q,d}} + \frac{\varepsilon}{4} \\
&\leq \frac{\varepsilon}{4} + \frac{\varepsilon}{4} + \| c - d \|_V + \frac{\varepsilon}{4} \leq \varepsilon \text{.}
\end{split}
\end{multline*}

This concludes our lemma.
\end{proof}

We now turn to the proof of our main Theorem (\ref{main-qt-thm}).
\begin{notation}
Let $\mu$ be the probability Haar measure on the $2$-torus $\T^2$.

For any $f \in L^1(\T^2,\mu)$ and $\theta\in\R$, we denote by $\beta_\theta^f$ the operator on $\qt{\theta}$ defined for all $a\in\qt{\theta}$ by:
\begin{equation*}
\beta_\theta^f (a) = \int_{\T^2} f(z)\beta_\theta^z(a) \, d\mu(z)
\end{equation*}
which is continuous with $\opnorm{\beta_\theta^f}{}{\qt{\theta}}\leq \|f\|_{L^1(\T^2,\mu)}$.
\end{notation}

\begin{proof}[Proof of Theorem (\ref{main-qt-thm})]
Let $p \in \Z$ and $q \in \N\setminus\{0\}$. Let $d \in q\N\setminus\{0\}$. Let $X = \R\setminus\left\{\frac{p}{q}\right\}$. 

Let $\theta \in X$ and let $\varepsilon > 0$. By \cite{Latremoliere13c}, there exists $\delta_\varepsilon > 0$, a self-adjoint trace-class operator $T$ on $\ell^2(\Z^2)$ of norm $1$, a finite dimensional subspace $V$ of $\ell^1(\Z^2)$ and a nonnegative continuous function $\mathrm{Fe}:\T^2\rightarrow[0,\infty)$ such that if $\vartheta\in[\theta-\delta_{\varepsilon},\theta+\delta_{\varepsilon}]$, the range of $\beta^{\mathrm{Fe}}_\theta(\sa{\qt{\theta}}) = V$, such that the length of the bridge $(\alg{B}(\ell^2(\Z^2)),T,\pi_\theta,\pi_\vartheta)$ is no more than $\frac{\varepsilon}{16}$, where $\pi_\theta$ and $\pi_\vartheta$ are the GNS representations of $\qt{\theta}$ an $\qt{\vartheta}$, respectively, for the unique tracial states. Moreover, we record that $(a,\vartheta)\in V\times X \mapsto \Lip_{\vartheta}(a)$ is continuous.

To begin with, for all $\vartheta\in\R$, we note that if $a\in\ell^1(\Z^2)$, then $\beta_\vartheta^z(a)$ does not depend on $\vartheta \in \R$ for any $z\in \T^2$. Thus, the restriction of $\beta_\vartheta^{\mathsf{Fe}}$ to $\ell^1(\Z^2)$ is independent of $\vartheta$, valued in $V$, and will be denoted by $\beta^{\mathsf{Fe}}$.

By Lemma (\ref{anchors-lemma}), there exists a finite subset $F = \{\omega_j : j \in \{1,\ldots,N\} \}$ of $\modlip{1}{\HeisenbergMod{\theta}{p,q,d}}\setminus\{0\}$ for some $N\in\N$ and $\delta_0 > 0$ such that, for all $\vartheta \in [\theta - \delta_0, \theta + \delta_0]$, the set:
\begin{equation*}
\left\{ \frac{\CDN_\theta^{p,q,d}(\omega_j)}{\CDN_{\vartheta}^{p,q,d}(\omega_j)} \omega_j : j \in \{1,\ldots,N\} \right\}
\end{equation*}
is $\frac{\varepsilon}{16}$-dense in the closed unit ball of $\CDN_\vartheta^{p,q,d}$.

For each $\omega\in F$, the map $\vartheta \in X \mapsto \CDN_\vartheta^{p,q,d}(\omega)$ is continuous by Theorem (\ref{d-norm-cont-thm}). The function $\vartheta\mapsto \Lip_\vartheta(\inner{\omega}{\eta}{\HeisenbergMod{\theta}{p,q,d}})$ is also continuous for all $\omega,\eta \in F$ by \cite{Latremoliere05}. Last, for any $\omega \in F$, we note that the continuous function $\vartheta \in X \mapsto\CDN_\vartheta^{p,q,d}(\omega)$, reaches its minimum on the compact $[\theta-\delta_\varepsilon,\theta+\delta_\varepsilon]$, and thus in particular, since $\omega\not=0$ and $\CDN_\vartheta^{p,q,d}$ is a norm, this minimum is not zero.

Thus the functions:
\begin{equation*}
\mathsf{y}^\Re_{j,k} : \vartheta \in X \mapsto \frac{\Lip_\theta\left( \Re \beta^{\mathsf{Fe}}\inner{\omega_j}{\omega_k}{\HeisenbergMod{\theta}{p,q,d}} \right)}{\Lip_\vartheta\left( \Re \beta^{\mathsf{Fe}}\inner{\omega_j}{\omega_k}{\HeisenbergMod{\vartheta}{p,q,d}} \right)} - \frac{\CDN_{\theta}^{p,q,d}(\omega_j) \CDN_\theta^{p,q,d}(\omega_k)}{\CDN_\vartheta^{p,q,d}(\omega_j) \CDN_\vartheta^{p,q,d}(\omega_k)}
\end{equation*}
and
\begin{equation*}
\mathsf{y}^\Im_{j,k} : \vartheta \in X \mapsto \frac{\Lip_\theta\left( \Im \beta^{\mathsf{Fe}}\inner{\omega_j}{\omega_k}{\HeisenbergMod{\theta}{p,q,d}} \right)}{\Lip_\vartheta\left( \Im \beta^{\mathsf{Fe}}\inner{\omega_j}{\omega_k}{\HeisenbergMod{\vartheta}{p,q,d}} \right)} - \frac{\CDN_{\theta}^{p,q,d}(\omega_j) \CDN_\theta^{p,q,d}(\omega_k)}{\CDN_\vartheta^{p,q,d}(\omega_j) \CDN_\vartheta^{p,q,d}(\omega_k)}
\end{equation*}
are continuous as well for all $j,k \in \{1,\ldots,N\}$. Consequently, the function:
\begin{equation*}
\mathsf{y} = \max_{j,k \in \{1,\ldots,N\}} \left\{ \left| \mathsf{y}^\Re_{j,k} \right|, \left| \mathsf{y}^\Im_{j,k} \right| \right\}
\end{equation*}
is continuous as the maximum of finitely many continuous functions. We also note that $\mathsf{y}(\theta) = 0$. Thus there exists $\delta_2 > 0$ such that $|\mathsf{y}| < \frac{\varepsilon}{16}\text{ on }[\theta-\delta_2,\theta + \delta_2]$.

For each $j\in \{1,\ldots,N\}$, let:
\begin{equation*}
\eta_j = \frac{\CDN_\theta(\omega_j)}{\CDN_\vartheta(\omega_j)}\omega_j\text{.}
\end{equation*}

By construction, we have $\CDN_\vartheta(\eta_j) = \CDN_\theta(\omega_j)$.

Last, by \cite[Lemma 3.2]{Latremoliere17a}, there exists $\delta_3 > 0$ such that for all $\vartheta\in [\theta-\delta_3,\theta+\delta_3]$ we have, for all $j,k\in\{1,\ldots,N\}$:
\begin{equation*}
\left\|\inner{\eta_j}{\eta_k}{\HeisenbergMod{\theta}{p,q,d}} - \inner{\eta_j}{\eta_k}{\HeisenbergMod{\vartheta}{p,q,d}}\right\|_{\ell_1(\Z^2)} \leq \frac{\varepsilon}{16} \text{.}
\end{equation*} 

Let $\delta_4 = \min\{\delta_{\frac{\varepsilon}{16}},\delta_2,\delta_3\}$ and $\vartheta\in[\theta-\delta_4,\theta+\delta_4]$.

We now begin a string of inequalities for two given $j,k \in \{1,\ldots,N\}$. To begin with, by \cite[Proposition 3.8]{Latremoliere05}, for all $a\in\qt{\vartheta}$:
\begin{equation*}
\|a - \beta^{\mathsf{Fe}}a\|_{\qt{\vartheta}} \leq \|\Re a - \beta^{\mathsf{Fe}}\Re a\|_{\qt{\vartheta}} + \|\Im a - \beta^{\mathsf{Fe}}\Im a\|_{\qt{\vartheta}} \leq \frac{\varepsilon}{16}\left(\Lip_{\vartheta}(\Re a) + \Lip_{\vartheta}(\Im a)\right) \leq \frac{\varepsilon}{8}\Lip_{\vartheta}(a) \text{.}
\end{equation*}

Therefore, using the inner Leibniz inequality:
\begin{align}\label{main-eq1}
&\, \opnorm{\pi_\theta\left(\inner{\omega_j}{\omega_k}{\HeisenbergMod{\theta}{p,q,d}}\right) T - T \pi_\vartheta\left(\inner{\eta_j}{\eta_k}{\HeisenbergMod{\vartheta}{p,q,d}}\right)}{}{\ell^2(\Z^2)} \\ 
&\quad\leq \left\| \inner{\omega_j}{\omega_k}{\HeisenbergMod{\theta}{p,q,d}} - \beta^{\mathsf{Fe}}\inner{\omega_j}{\omega_k}{\HeisenbergMod{\theta}{p,q,d}} \right\|_{\qt{\theta}} \nonumber \\
&\quad\quad + \left\| \inner{\eta_j}{\eta_k}{\HeisenbergMod{\theta}{p,q,d}} - \beta^{\mathsf{Fe}}\inner{\eta_j}{\eta_k}{\HeisenbergMod{\theta}{p,q,d}} \right\|_{\qt{\vartheta}} \nonumber \\
&\quad\quad + \opnorm{\pi_\theta\left(\beta^{\mathsf{Fe}}\inner{\omega_j}{\omega_k}{\HeisenbergMod{\theta}{p,q,d}}\right) T - T \pi_\vartheta\left(\beta^{\mathsf{Fe}}\inner{\eta_j}{\eta_k}{\HeisenbergMod{\vartheta}{p,q,d}}\right)}{}{\ell^2(\Z^2)} \nonumber \\ 
&\quad\leq \frac{\varepsilon}{8}\left(\Lip_{\theta}\left(\inner{\omega_j}{\omega_k}{\HeisenbergMod{\theta}{p,q,d}}\right) + \Lip_{\vartheta}\left(\inner{\eta_j}{\eta_k}{\HeisenbergMod{\vartheta}{p,q,d}}\right)\right) \nonumber \\
&\quad\quad + \opnorm{\pi_\theta\left(\beta^{\mathsf{Fe}}\inner{\omega_j}{\omega_k}{\HeisenbergMod{\theta}{p,q,d}}\right) T - T \pi_\vartheta\left(\beta^{\mathsf{Fe}}\inner{\eta_j}{\eta_k}{\HeisenbergMod{\vartheta}{p,q,d}}\right)}{}{\ell^2(\Z^2)} \nonumber \\
&\leq \frac{\varepsilon}{8} (\CDN_\theta^{p,q,d}(\omega_j)\|\omega_k\|_{\HeisenbergMod{\theta}{p,q,d}} + \CDN_\theta^{p,q,d}(\omega_k)\|\omega_j\|_{\HeisenbergMod{\theta}{p,q,d}} \nonumber \\
&\quad\quad + \CDN_\vartheta^{p,q,d}(\eta_j)\|\eta_k\|_{\HeisenbergMod{\vartheta}{p,q,d}} + \CDN_\vartheta^{p,q,d}(\eta_k)\|\eta_j\|_{\HeisenbergMod{\vartheta}{p,q,d}}) \nonumber \\
&\quad\quad + \opnorm{\pi_\theta\left(\beta^{\mathsf{Fe}}\inner{\omega_j}{\omega_k}{\HeisenbergMod{\theta}{p,q,d}}\right) T - T \pi_\vartheta\left(\beta^{\mathsf{Fe}}\inner{\eta_j}{\eta_k}{\HeisenbergMod{\vartheta}{p,q,d}}\right)}{}{\ell^2(\Z^2)} \nonumber \\
&\leq \frac{\varepsilon}{2} + \opnorm{\pi_\theta\left(\beta^{\mathsf{Fe}}\inner{\omega_j}{\omega_k}{\HeisenbergMod{\theta}{p,q,d}}\right) T - T \pi_\vartheta\left(\beta^{\mathsf{Fe}}\inner{\eta_j}{\eta_k}{\HeisenbergMod{\vartheta}{p,q,d}}\right)}{}{\ell^2(\Z^2)} \nonumber \text{.}
\end{align}

Our next step is to replace $\beta^{\mathsf{Fe}}\inner{\eta_j}{\eta_k}{\HeisenbergMod{\vartheta}{p,q,d}}$ with $\beta^{\mathsf{Fe}}\inner{\eta_j}{\eta_k}{\HeisenbergMod{\theta}{p,q,d}}$, because our work in \cite{Latremoliere13c} follows a single element in $V$ from $\qt{\theta}$ to $\qt{\vartheta}$. 

Now, noting that $\beta^{\mathsf{Fe}}$ has norm $1$:
\begin{multline*}
\left\|\beta^{\mathsf{Fe}}\inner{\eta_j}{\eta_k}{\HeisenbergMod{\vartheta}{p,q,d}}\right\|_{\qt{\vartheta}}\\
\begin{split}
&\leq \left\|\beta^{\mathsf{Fe}}\inner{\eta_j}{\eta_k}{\HeisenbergMod{\theta}{p,q,d}}\right\|_{\qt{\vartheta}} + \left\|\beta^{\mathsf{Fe}}\inner{\eta_j}{\eta_k}{\HeisenbergMod{\vartheta}{p,q,d}} - \beta^{\mathsf{Fe}}\inner{\eta_j}{\eta_k}{\HeisenbergMod{\theta}{p,q,d}}\right\|_{\qt{\vartheta}} \\
&\leq \left\|\beta^{\mathsf{Fe}}\inner{\eta_j}{\eta_k}{\HeisenbergMod{\theta}{p,q,d}}\right\|_{\qt{\vartheta}} + \left\|\inner{\eta_j}{\eta_k}{\HeisenbergMod{\vartheta}{p,q,d}} - \inner{\eta_j}{\eta_k}{\HeisenbergMod{\theta}{p,q,d}}\right\|_{\qt{\vartheta}} \\
&\leq \left\|\beta^{\mathsf{Fe}}\inner{\eta_j}{\eta_k}{\HeisenbergMod{\theta}{p,q,d}}\right\|_{\qt{\vartheta}} + \left\|\inner{\eta_j}{\eta_k}{\HeisenbergMod{\vartheta}{p,q,d}} - \inner{\eta_j}{\eta_k}{\HeisenbergMod{\theta}{p,q,d}}\right\|_{\ell^1(\Z^2)}\\
&\leq \left\|\beta^{\mathsf{Fe}}\inner{\eta_j}{\eta_k}{\HeisenbergMod{\theta}{p,q,d}}\right\|_{\qt{\vartheta}} + \frac{\varepsilon}{16} \text{.}
\end{split}
\end{multline*}

Thus we conclude, as $\opnorm{T}{}{\ell^2(\Z^2)} = 1$:
\begin{multline}\label{main-eq2}
\opnorm{\pi_\theta\left(\beta^{\mathsf{Fe}}\inner{\omega_j}{\omega_k}{\HeisenbergMod{\theta}{p,q,d}}\right) T - T \pi_\vartheta\left(\beta^{\mathsf{Fe}}\inner{\eta_j}{\eta_k}{\HeisenbergMod{\vartheta}{p,q,d}}\right)}{}{\ell^2(\Z^2)}\\ \leq \opnorm{\pi_\theta\left(\beta^{\mathsf{Fe}}\inner{\omega_j}{\omega_k}{\HeisenbergMod{\theta}{p,q,d}}\right) T - T \pi_\vartheta\left(\beta^{\mathsf{Fe}}\inner{\eta_j}{\eta_k}{\HeisenbergMod{\theta}{p,q,d}}\right)}{}{\ell^2(\Z^2)} + \frac{\varepsilon}{16} \text{.}
\end{multline}
Inserting Inequality (\ref{main-eq2}) in Inequality (\ref{main-eq1}) we thus have:
\begin{multline}\label{main-eq3}
\opnorm{\pi_\theta\left(\inner{\omega_j}{\omega_k}{\HeisenbergMod{\theta}{p,q,d}}\right) T - T \pi_\vartheta\left(\inner{\eta_j}{\eta_k}{\HeisenbergMod{\vartheta}{p,q,d}}\right)}{}{\ell^2(\Z^2)} \\ 
\leq \frac{9\varepsilon}{16} + \opnorm{\pi_\theta\left(\beta^{\mathsf{Fe}}\inner{\omega_j}{\omega_k}{\HeisenbergMod{\theta}{p,q,d}}\right) T - T \pi_\vartheta\left(\beta^{\mathsf{Fe}}\inner{\eta_j}{\eta_k}{\HeisenbergMod{\theta}{p,q,d}}\right)}{}{\ell^2(\Z^2)} \text{.}
\end{multline}

Let now:
\begin{equation*}
s^\Re_{j,k} = s\left(\Re\inner{\omega_j}{\omega_k}{\HeisenbergMod{\theta}{p,q,d}}, \vartheta\right) = \frac{\Lip_\theta\left(\Re\inner{\omega_j}{\omega_k}{\HeisenbergMod{\theta}{p,q,d}}\right)}{\Lip_\vartheta\left(\Re\inner{\omega_j}{\omega_k}{\HeisenbergMod{\theta}{p,q,d}}\right)}
\end{equation*}
and:
\begin{equation*}
s^\Im_{j,k} = s\left(\Im\inner{\omega_j}{\omega_k}{\HeisenbergMod{\theta}{p,q,d}}, \vartheta\right) = \frac{\Lip_\theta\left(\Im\inner{\omega_j}{\omega_k}{\HeisenbergMod{\theta}{p,q,d}}\right)}{\Lip_\vartheta\left(\Im\inner{\omega_j}{\omega_k}{\HeisenbergMod{\theta}{p,q,d}}\right)}\text{.}
\end{equation*}
By \cite[Claim 5.15]{Latremoliere13c}, we record that $|1-s^\Re_{j,k}|<\frac{\varepsilon}{16}$ and $|1-s^\Im_{j,k}|<\frac{\varepsilon}{16}$.

We thus compute:
\begin{align}\label{main-eq4}
&\, \opnorm{\Re\left(\pi_\theta\left(\beta^{\mathsf{Fe}}\inner{\omega_j}{\omega_k}{\HeisenbergMod{\theta}{p,q,d}}\right) T - T \pi_\vartheta\left(\beta^{\mathsf{Fe}}\inner{\eta_j}{\eta_k}{\HeisenbergMod{\theta}{p,q,d}}\right)\right)}{}{\ell^2(\Z^2)} \nonumber \\
&\quad \leq \opnorm{\pi_\theta\left(\Re\beta^{\mathsf{Fe}}\inner{\omega_j}{\omega_k}{\HeisenbergMod{\theta}{p,q,d}}\right) T - T \pi_\vartheta\left(\frac{\CDN_\theta^{p,q,d}(\omega_j)\CDN_\theta^{p,q,d}(\omega_k)}{\CDN_\vartheta^{p,q,d}(\omega_j)\CDN_\vartheta^{p,q,d}(\omega_j)} \Re\beta^{\mathsf{Fe}}\inner{\omega_j}{\omega_k}{\HeisenbergMod{\theta}{p,q,d}}\right)}{}{\ell^2(\Z^2)} \nonumber
\\
&\quad \leq \opnorm{\pi_\theta\left(\Re\beta^{\mathsf{Fe}}\inner{\omega_j}{\omega_k}{\HeisenbergMod{\theta}{p,q,d}}\right) T - T \pi_\vartheta\left(s^\Re_{j,k} \Re\beta^{\mathsf{Fe}}\inner{\omega_j}{\omega_k}{\HeisenbergMod{\theta}{p,q,d}}\right)}{}{\ell^2(\Z^2)} \nonumber \\
&\quad\quad + \opnorm{\pi_\vartheta\left(\left(s^\Re_{j,k} - \frac{\CDN_\theta^{p,q,d}(\omega_j)\CDN_\theta^{p,q,d}(\omega_k)}{\CDN_\vartheta^{p,q,d}(\omega_j)\CDN_\vartheta^{p,q,d}(\omega_k)}\right) \Re\beta^{\mathsf{Fe}}\inner{\omega_j}{\omega_k}{\HeisenbergMod{\vartheta}{p,q,d}}\right)}{}{\ell^2(\Z^2)} \nonumber \\
&\quad \leq \opnorm{\pi_\theta\left(\Re\beta^{\mathsf{Fe}}\inner{\omega_j}{\omega_k}{\HeisenbergMod{\theta}{p,q,d}}\right) T - T \pi_\vartheta\left(s^\Re_{j,k} \Re\beta^{\mathsf{Fe}}\inner{\omega_j}{\omega_k}{\HeisenbergMod{\theta}{p,q,d}}\right)}{}{\ell^2(\Z^2)} \nonumber \\
  &\quad + \left\| \left(s^\Re_{j,k} - \frac{\CDN_\theta^{p,q,d}(\omega_j)\CDN_\theta^{p,q,d}(\omega_k)}{\CDN_\vartheta^{p,q,d}(\omega_j)\CDN_\vartheta^{p,q,d}(\omega_k)}\right) \inner{\omega_j}{\omega_k}{\HeisenbergMod{\vartheta}{p,q,d}} \right\|_{qt{\vartheta}} \text{.}
\end{align}

By assumption on $\vartheta$, we have:
\begin{equation}\label{main-eq5}
\left\| \left(s^\Re_{j,k} - \frac{\CDN_\theta^{p,q,d}(\omega_j)\CDN_\theta^{p,q,d}(\omega_k)}{\CDN_\vartheta^{p,q,d}(\omega_j)\CDN_\vartheta^{p,q,d}(\omega_k)}\right) \inner{\omega_j}{\omega_k}{\HeisenbergMod{\vartheta}{p,q,d}} \right\|_{\qt{\vartheta}} \leq \mathsf{y}(\vartheta) < \frac{\varepsilon}{16} \text{,}
\end{equation}
and thus, plugging Inequality (\ref{main-eq5}) in Inequality (\ref{main-eq4}), we obtain:
\begin{multline}\label{main-eq6}
\opnorm{\Re\left( \pi_\theta\left(\Re\beta^{\mathsf{Fe}}\inner{\omega_j}{\omega_k}{\HeisenbergMod{\theta}{p,q,d}}\right) T - T \pi_\vartheta\left(\Re\beta^{\mathsf{Fe}}\inner{\eta_j}{\eta_k}{\HeisenbergMod{\theta}{p,q,d}}\right) \right)}{}{\ell^2(\Z^2)} \\
\leq \opnorm{\pi_\theta\left(\Re\beta^{\mathsf{Fe}}\inner{\omega_j}{\omega_k}{\HeisenbergMod{\theta}{p,q,d}}\right) T - T \pi_\vartheta\left(s_{j,k} \Re\beta^{\mathsf{Fe}}\inner{\omega_j}{\omega_k}{\HeisenbergMod{\theta}{p,q,d}}\right)}{}{\ell^2(\Z^2)} + \frac{\varepsilon}{16} \text{.}
\end{multline}

The elements $\Re\beta^{\mathsf{Fe}}\inner{\omega_j}{\omega_k}{\HeisenbergMod{\vartheta}{p,q,d}}$, for all $\vartheta\in X$, lie in $V$. We now wish them to lie in $E = \ker\tau\cap V$ with $\tau : f\in\ell^1(\Z^2)\mapsto f(0)$ so that we may apply our work in \cite{Latremoliere13c}. Again to ease notations, let:
\begin{equation*}
\tau_{\vartheta}^{j,k} = \tau\left(\beta^{\mathsf{Fe}}\inner{\omega_j}{\omega_k}{\HeisenbergMod{\vartheta}{p,q,d}}\right)\text{.}
\end{equation*}
Of course, $\tau_{\vartheta}^{j,k} = \tau\left(\Re\beta^{\mathsf{Fe}}\inner{\omega_j}{\omega_k}{\HeisenbergMod{\vartheta}{p,q,d}}\right) = \tau\left(\Im\beta^{\mathsf{Fe}}\inner{\omega_j}{\omega_k}{\HeisenbergMod{\vartheta}{p,q,d}}\right)$.

We thus evaluate:
\begin{multline*}
\opnorm{\pi_\theta\left(\Re\beta^{\mathsf{Fe}}\inner{\omega_j}{\omega_k}{\HeisenbergMod{\theta}{p,q,d}}\right) T - T \pi_\vartheta\left(s_{j,k} \Re\beta^{\mathsf{Fe}}\inner{\omega_j}{\omega_k}{\HeisenbergMod{\theta}{p,q,d}}\right)}{}{\ell^2(\Z^2)} \\
\begin{split}
&\leq \opnorm{\pi_\theta\left(\Re\beta^{\mathsf{Fe}}\inner{\omega_j}{\omega_k}{\HeisenbergMod{\theta}{p,q,d}} - \tau_\theta^{j,k} \right) T - T \pi_\vartheta\left(s_{j,k} \Re\beta^{\mathsf{Fe}}\inner{\omega_j}{\omega_k}{\HeisenbergMod{\theta}{p,q,d}} - s_{j,k} \tau_\theta^{j,k} \right)}{}{\ell^2(\Z^2)} \\
&\quad + \left| \tau_\theta^{j,k} - s_{j,k}\tau_\theta^{j,k} \right|  \text{.}
\end{split}
\end{multline*}

Now $\left| \tau_\theta - s_{j,k}\tau_\theta\right| \leq |1 - s_{j,k}| \tau_{j,k} \leq |1 - s_{j,k}| < \frac{\varepsilon}{16}$ since $\tau_{j,k} \leq \|\inner{\omega_j}{\omega_k}{\HeisenbergMod{\theta}{p,q,d}}\|_{\qt{\vartheta}} \leq 1$.

We are now in the position to apply our work in \cite{Latremoliere13c}. To this end, let us make a simple observation. If $a\in E\setminus\{0\}$, and if $s(a,\vartheta) = \frac{\Lip_\theta(a)}{\Lip_\vartheta(a)} > 0$, then:
\begin{equation*}
\opnorm{\pi_\theta(a) T - T \pi_\vartheta(s(a)a)}{}{\ell^2(\Z^2)} \leq \Lip_{\vartheta}(a) \varepsilon \text{.}
\end{equation*}

Indeed, if $\Lip_{\vartheta}(a) = 0$ then $a \in \R\unit_{\A_\vartheta}$; as $a\in E$ we conclude that $a = 0$. Thus $s(a,\vartheta)$ is well-defined. 

We then note that $s(a) = s(ra)$ for any $r > 0$ by definition. Moreover, if $a\in E$ and $a\not=0$, then by \cite[Claim 5.15]{Latremoliere13c}:
\begin{multline*}
\opnorm{\pi_\vartheta(a) T - T \pi_\vartheta(s(a)a)}{}{\ell^2(\Z^2)} \\
\begin{split}
&= \|a\|_{\ell^1(\Z^2)}\opnorm{\pi_\theta\left(\frac{1}{\|a\|_{\ell^1(\Z^2)}} a\right) T - T \pi_\vartheta\left( \frac{s(\|a\|_{\ell^1(\Z^2)}^{-1}a)}{\|a\|_{\ell^1(\Z^2)}}a\right)}{}{\ell^2(\Z^2)} \\
&\leq \|a\|_{\ell^1(\Z^2)}\Lip_{\vartheta}\left( \frac{1}{\|a\|_{\ell^1(\Z^2)}}a\right) \varepsilon = \Lip_{\vartheta}(a) \varepsilon \text{.}
\end{split}
\end{multline*}

Returning to our main computation, we thus conclude:
\begin{multline}\label{main-eq8}
\opnorm{\pi_\theta\left(\Re\beta^{\mathsf{Fe}}\inner{\omega_j}{\omega_k}{\HeisenbergMod{\theta}{p,q,d}} - \tau_\theta^{j,k} \right) T - T \pi_\vartheta\left(s_{j,k} \Re\beta^{\mathsf{Fe}}\inner{\omega_j}{\omega_k}{\HeisenbergMod{\theta}{p,q,d}} - s_{j,k} \tau_\theta^{j,k} \right)}{}{\ell^2(\Z^2)} \\ \leq \frac{\varepsilon}{16} \text{.}
\end{multline}

We now insert Inequality (\ref{main-eq8}) into Inequality (\ref{main-eq6}) to conclude:
\begin{multline}\label{main-eq9}
\opnorm{\Re\left(\pi_\theta\left(\beta^{\mathsf{Fe}}\inner{\omega_j}{\omega_k}{\HeisenbergMod{\theta}{p,q,d}}\right) T - T \pi_\vartheta\left(\beta^{\mathsf{Fe}}\inner{\eta_j}{\eta_k}{\HeisenbergMod{\theta}{p,q,d}}\right)\right)}{}{\ell^2(\Z^2)} \leq \frac{3\varepsilon}{16} \text{.}
\end{multline}

We get the same inequality as Inequality (\ref{main-eq9}) for $\Im$ in place of $\Re$ by the same reasoning, so we get:
\begin{multline}\label{main-eq10}
\opnorm{\pi_\theta\left(\beta^{\mathsf{Fe}}\inner{\omega_j}{\omega_k}{\HeisenbergMod{\theta}{p,q,d}}\right) T - T \pi_\vartheta\left(\beta^{\mathsf{Fe}}\inner{\eta_j}{\eta_k}{\HeisenbergMod{\theta}{p,q,d}}\right)}{}{\ell^2(\Z^2)} \\
\begin{split}
&\leq \opnorm{\Re\left(\pi_\theta\left(\beta^{\mathsf{Fe}}\inner{\omega_j}{\omega_k}{\HeisenbergMod{\theta}{p,q,d}}\right) T - T \pi_\vartheta\left(\beta^{\mathsf{Fe}}\inner{\eta_j}{\eta_k}{\HeisenbergMod{\theta}{p,q,d}}\right)\right)}{}{\ell^2(\Z^2)} \\
&\quad + \opnorm{\Im\left(\pi_\theta\left(\beta^{\mathsf{Fe}}\inner{\omega_j}{\omega_k}{\HeisenbergMod{\theta}{p,q,d}}\right) T - T \pi_\vartheta\left(\beta^{\mathsf{Fe}}\inner{\eta_j}{\eta_k}{\HeisenbergMod{\theta}{p,q,d}}\right)\right)}{}{\ell^2(\Z^2)} \\
&\leq \frac{3\varepsilon}{8} \text{.}
\end{split}
\end{multline}

Thus inserting Inequality (\ref{main-eq10}) in Inequality (\ref{main-eq3}), we conclude:
\begin{multline}\label{qt-modular-reach-eq}
\opnorm{\pi_\theta\left(\inner{\omega_j}{\omega_k}{\HeisenbergMod{\theta}{p,q,d}}\right) T - T \pi_\vartheta\left(\inner{\eta_j}{\eta_k}{\HeisenbergMod{\vartheta}{p,q,d}}\right)}{}{\ell^2(\Z^2)} 
\leq \frac{9\varepsilon}{16} + \frac{3\varepsilon}{8} = \frac{15\varepsilon}{16} \text{.}
\end{multline}

By construction, the following is a modular bridge (note that $\opnorm{T}{}{\ell^2(\Z^2)} = 1$):
\begin{equation*}
\gamma_\vartheta = \left(\alg{B}(\ell^2(\Z)),T,\pi_\theta,\pi_\vartheta,(\omega_j)_{j\in\{1,\ldots,n\}},(\eta_j)_{j\in\{1,\ldots,n\}}\right)\text{.}
\end{equation*}

The length of the basic bridge $(\alg{B}(\ell^2(\Z^2),T,\pi_\theta,\pi_\vartheta))$ is no more than $\frac{\varepsilon}{16}$, so the basic reach and the height of $\gamma_\vartheta$ are bounded by $\frac{\varepsilon}{16}$. Now, Expression (\ref{qt-modular-reach-eq}) states that the modular reach of $\gamma$ is bounded above by $\frac{15\varepsilon}{16}$. Thus by Definition (\ref{bridge-length-def}), the reach of $\gamma_\vartheta$ is no more than $\frac{\varepsilon}{16} + \frac{15\varepsilon}{16} = \varepsilon$.

Our choice of anchors and co-anchors ensures that the imprint of $\gamma_\vartheta$ is no more than $\frac{\varepsilon}{16}$. Thus by Definition (\ref{bridge-length-def}), the length of $\gamma$ is no more than $\varepsilon = \max\left\{\varepsilon ,\frac{\varepsilon}{16}\right\}$. By Expression (\ref{propinquity-eq}), we conclude that:
\begin{multline*}
\modpropinquity{}\left(\left(\HeisenbergMod{\vartheta}{p,q,d}, \inner{\cdot}{\cdot}{\HeisenbergMod{\vartheta}{p,q,d}}, \CDN_\vartheta^{p,q,d}, \qt{\vartheta}, \Lip_{\vartheta} \right),\right. \\
\left. \left(\HeisenbergMod{\theta}{p,q,d}, \inner{\cdot}{\cdot}{\HeisenbergMod{\theta}{p,q,d}}, \CDN_\theta^{p,q,d}, \qt{\theta}, \Lip_{\theta}\right) \right) \leq \varepsilon \text{.}
\end{multline*}
This concludes our proof.
\end{proof}

\bibliographystyle{amsplain}

\providecommand{\bysame}{\leavevmode\hbox to3em{\hrulefill}\thinspace}
\providecommand{\MR}{\relax\ifhmode\unskip\space\fi MR }
\providecommand{\MRhref}[2]{%
  \href{http://www.ams.org/mathscinet-getitem?mr=#1}{#2}
}
\providecommand{\href}[2]{#2}

\vfill

\end{document}